\def\final{0}
\titleformat{\subsection}[runin]
{\normalfont\normalsize\bfseries\filcenter}{\thesubsection.}{1 ex}{}
\newcommand{\mynote}[1]{\marginpar{\tiny\sf #1}}
\newcommand{\mynote}[1]{}
\newcommand{\figref}[1]{Figure \ref{fig:#1}}
\newcommand{\propref}[1]{Proposition \ref{prop:#1}}
\newcommand{\proprefX}[1]{\ref{prop:#1}}
\newcommand{\theoref}[1]{Theorem \ref{theo:#1}}
\newcommand{\theorefX}[1]{\ref{theo:#1}}
\newcommand{\secref}[1]{Section \ref{sec:#1}}
\newcommand{\proplab}[1]{\label{prop:#1}}
\newcommand{\theolab}[1]{\label{theo:#1}}
\newcommand{\seclab}[1]{\label{sec:#1}}
\newcommand{\conjlab}[1]{\label{conj:#1}}
\renewcommand{\vec}[1]{\mathbf{#1}}
\newcommand{\iprod}[2]{\left\langle {#1},{#2}\right\rangle}
\newcommand{\R}{\mathbb{R}}
\newcommand{\Z}{\mathbb{Z}}
\newcommand{\eqlab}[1]{\label{eq:#1}}
\renewcommand{\eqref}[1]{(\ref{eq:#1})}
\newcommand{\eop}{\hfill$\qed$}
\newcommand{\bgamma}{\bm{\gamma}}
\newcommand{\cent}{\operatorname{cent}}
\newcommand{\teich}{\operatorname{teich}}
\newcommand{\teichgamma}{\operatorname{teich_{\Gamma}}}
\newcommand{\N}{\mathbb{N}}
\newcommand{\Euc}{\operatorname{Euc}}
\newcommand{\Rep}{\operatorname{Rep}}
\newcommand{\Teich}{\operatorname{Teich}}
\newcommand{\ttmat}[4]{\begin{pmatrix}{#1} & {#2} \\ {#3} & {#4}\end{pmatrix}}
\newcommand{\JMat}[1]{ \left( \begin{array}{rr} #1 \end{array} \right)}
\newcommand{\Trans}{\Lambda}
\DeclareMathOperator{\rk}{rk}
\DeclareMathOperator{\PSL}{PSL}
\begin{document}
\title{Generic rigidity with forced symmetry and sparse colored graphs}
\author{Justin Malestein\thanks{Department of Mathematics,
Temple University, \url{justmale@temple.edu}}
\and Louis Theran\thanks{Institut für Mathematik,
Diskrete Geometrie, Freie Universität Berlin, \url{theran@math.fu-berlin.de}}}
\date{}
\maketitle
\begin{abstract}
We review some recent results in the generic rigidity theory of planar frameworks with forced symmetry,
giving a uniform treatment to the topic.  We also give new combinatorial characterizations of minimally rigid
periodic frameworks with fixed-area fundamental domain and fixed-angle fundamental domain.
\end{abstract}

\section{Introduction} \seclab{intro}
The Maxwell-Laman Theorem is the touchstone result of combinatorial rigidity theory.
\begin{theorem}[\maxwelllaman][{\cite{M64,L70}}]
A generic bar-joint framework in the plane is minimally rigid if and only if the graph
defined by the frameworks edges has $n$ vertices $m=2n - 3$ edges, and, for all subgraphs
on $n'$ vertices and $m'$ edges, $m'\le 2n' -3$.
\end{theorem}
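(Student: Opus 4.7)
The plan is to prove necessity and sufficiency separately, using the rigidity matroid framework. For necessity, I would set up the $m \times 2n$ rigidity matrix $R(p)$ whose row for an edge $ij$ has $p_i - p_j$ and $p_j - p_i$ in the blocks for vertices $i$ and $j$. The framework is infinitesimally rigid exactly when $\rk R(p) = 2n - 3$, since the kernel always contains the three-dimensional space of infinitesimal translations and rotations, and generic rigidity is equivalent to infinitesimal rigidity at a generic $p$. Hence minimal rigidity forces $m = 2n - 3$. For the sparsity bound on a subgraph $H$ with $n'$ vertices and $m'$ edges, the same argument applied to $H$ alone shows its rows in $R(p)$ have rank at most $2n' - 3$, so $m' > 2n' - 3$ would produce a dependency among those rows, contradicting minimality since some edge could then be deleted without dropping the rank of the full matrix.

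For sufficiency, I would use the matroidal approach. First, verify that the edge sets satisfying the sparsity count on every subset form the independent sets of a matroid $\mathcal{M}$ on $E(K_n)$, whose bases are the $(2,3)$-tight graphs with $2n - 3$ edges; this is a standard exchange argument. Simultaneously, generic linear independence of rows of $R(p)$ defines the rigidity matroid $\mathcal{R}$, and the necessity direction gives $\mathcal{R} \subseteq \mathcal{M}$. To establish the reverse inclusion I would induct via Henneberg's construction of $(2,3)$-tight graphs from a single edge through two moves: the $0$-extension, which adds a degree-$2$ vertex, and the $1$-extension, which deletes an edge $ab$ and adds a new vertex adjacent to $a$, $b$, and some third vertex $c$. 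The $0$-extension preserves generic independence because a generically placed new point is not collinear with its two neighbors, so the two new rows of $R(p)$ are independent from the rest. The $1$-extension is handled by exhibiting a single realization of the extended graph where the rigidity matrix has full rank (for instance, placing the new vertex on the line $ab$), which shows that the relevant determinantal polynomial is not identically zero and hence is nonzero generically.

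The main obstacle is the combinatorial lemma underlying the Henneberg induction: every $(2,3)$-tight graph on at least two vertices has either a vertex of degree $2$ or a vertex $v$ of degree $3$ such that some pair among its three neighbors is a non-edge whose addition to $G - v$ preserves $(2,3)$-tightness. Proving this requires a careful analysis of the maximal tight subgraphs passing through a degree-$3$ vertex and the way their counts interact with the global sparsity bound; the delicate point is ruling out the case where \emph{every} candidate replacement edge lies inside a tight subgraph that would be violated. Once this combinatorial lemma is in place, the inductive argument yields $\mathcal{M} \subseteq \mathcal{R}$, and comparing top ranks (both equal to $2n - 3$) completes the proof.
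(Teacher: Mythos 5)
The paper does not actually prove this statement; it is quoted as the classical Maxwell--Laman theorem with citations to Maxwell and Laman, so there is no in-paper argument to compare yours against. Your proposal follows the standard route (rigidity matrix for necessity, identification of the generic rigidity matroid with the $(2,3)$-sparsity matroid via Henneberg induction for sufficiency), and the outline is sound. The necessity argument via the $3$-dimensional kernel of trivial infinitesimal motions is correct, the $0$-extension step is correct, and the specialization trick for the $1$-extension is the standard and valid one: placing the new vertex $v$ on the line through $p_a$ and $p_b$ makes the rows for $va$ and $vb$ combine (after cancelling the $v$-block) into a multiple of the deleted row for $ab$, while the row for $vc$ supplies two independent coordinates in the $v$-block, so the relevant minor is nonzero at one point and hence generically.

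The one genuine gap is the one you flag yourself: the combinatorial reduction lemma for degree-$3$ vertices. As written, you assert but do not prove that some pair of neighbors of a degree-$3$ vertex $v$ can be added to $G - v$ without violating $(2,3)$-sparsity. The missing ingredient is the submodularity-type fact that if two subgraphs each satisfy $m' = 2n' - 3$ with equality and share at least two vertices, then so does their union; with that in hand, assuming all three candidate pairs are blocked by tight subgraphs of $G-v$ lets you take unions to produce a single tight subgraph containing all three neighbors of $v$, and adding back $v$ with its three edges then violates the sparsity of $G$ itself. Without this lemma the induction does not close, so you should either prove it or cite it explicitly. A smaller point of hygiene: you should also record why the induction always has a move available, namely that sparsity of $G$ forces minimum degree at least $2$ while $2m = 4n - 6 < 4n$ forces some vertex of degree at most $3$.
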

The key feature of this, and all such ``Laman-type results'' is that,
for almost all \emph{geometric} data, rigidity is determined by the \emph{combinatorial type} and can be
decided by efficient \emph{combinatorial} algorithms.

\subsection{Some generalizations}
Finding generalizations of the Maxwell-Laman Theorem has been the motivation for a lot of progress in the
field.  The \emph{body-bar} \cite{T84}, \emph{body-bar-hinge} \cite{T84,W88}, and \emph{panel-hinge} \cite{KT11}
frameworks has a rich generic theory in all dimensions.  Here the ``sparsity counts'' are of the form $m' \le Dn' - D$,
where $D$ is the dimension of the $d$-dimensional Euclidean group.  On the other hand, various elaborations of the
planar bar-joint model via \emph{pinning} \cite{F07,L80,R89}, \emph{slider-pinning} \cite{ST10,KT11a},
\emph{direction-length frameworks} \cite{SW99}, and other geometric restrictions like
\emph{incident vertices} \cite{FJK11} or, of more relevance here
\emph{symmetry} \cite{S10,S10a}, have all shed more light on the Maxwell-Laman Theorem itself.

In another direction, various families of graphs and hypergraphs defined by \emph{heriditary sparsity counts}
of the form $m'\le kn' - \ell'$ have been studied in terms of \emph{combinatorial structure} \cite{LS08},
\emph{inductive constructions} \cite{LS08,FS07}, \emph{sparsity-certifying decompositions} \cite{ST09,W88} and \emph{linear
representability} \cite{ST11} \cite[Appendix A]{W96} properties.  Running through much of this work is a matroidal perspective first
introduced by Lovász-Yemini \cite{LY82}.

While a lot is known about $(k,\ell)$-sparse graphs and hypergraphs, the parameter settings that yield interesting
\emph{rigidity} theorems seem to be somewhat isolated, despite the uniform combinatorial theory and
many operations that move between different sparsity families.

\subsection{Forced symmetry}
For the past several years, the rigidity and flexibility of frameworks with additional \emph{symmetry}
has received much attention,%
\footnote{See, e.g., the recent conferences \cite{LMS10,F11,RS12}.}
although it also goes back further.
Broadly speaking, there are two approaches to this: \emph{incidental symmetry}, in which one studies
a framework that may move in unrestricted ways but starts in a symmetric position \cite{S10,S10a,KG00,FG00,KG99,OP10}; and \emph{forced}
symmetry \cite{BS10,MT10,R09,MT11,MT12,RSW11} where a framework must maintain symmetry with respect to a
specific group throughout its motion.  Forced symmetry is particularly useful as a way to study \emph{infinite} frameworks%
\footnote{Infinite frameworks with no other assumptions can exhibit quite complicated behavior \cite{OP11}.}
arising in applications to crystallography \cite{R06,TRBR04}.

In a sequence of papers \cite{MT10,MT11,MT12}, we developed the generic rigidity theory for
the forced-symmetric frameworks in the plane.  The basic setup we consider is as follows: we are given a group
$\Gamma$ that acts discretely on the plane by Euclidean
isometries, a graph $\tilde{G} = (\tilde{V},\tilde{E})$, and a $\Gamma$-action $\varphi$ on $\tilde{G}$
with finite quotient that is free on the vertices and edges.  A (realized) \emph{$\Gamma$-framework}
$\tilde{G}(\vec p,\Phi)$ is given by a point set $\vec p = (\vec p_i)_{i\in \tilde{V}}$ and a
representation $\Phi$ of $\Gamma$ by Euclidean isometries, with the compatibility
condition
\begin{equation}\eqlab{compatibility}
\vec p_{\gamma\cdot i} = \Phi(\gamma)\cdot \vec p_i
\end{equation}
holding for all $\gamma\in \Gamma$ and $i\in \tilde{V}$.

Intuitively, the allowed continuous motions through $\tilde{G}(\vec p,\Phi)$
are those that preserve the lengths and connectivity of the bars, and symmetry with
respect to $\Gamma$, but the particular representation $\Phi$ is allowed to flex.  When
the only allowed motions are induced by Euclidean isometries, a framework is \emph{rigid}, and otherwise
it is \emph{flexible}.

The combinatorial model for $\Gamma$-frameworks is \emph{colored graphs},
which we describe in \secref{maxwell}.  These efficiently capture some
canonical $\Gamma$-framework invariants relating to how much flexibility from the
group representation $\Phi$ a sub-framework constraints.  The state of the art is:
\begin{theorem}[\stateoftheart][{\cite{MT10,MT11,MT12}}]\theolab{bigtheorem}
Let $\Gamma$ be one of:
\begin{itemize}
\item $\Z^2$, acting on the plane by translation
\item $\Z/k\Z$, for $k\in \N$, $k\ge 2$ acting on the plane by an order $k$ rotation around the origin
\item $\Z/2\Z$, acting on the plane by a reflection
\item A crystallographic group generated by translations and a rotation.
\end{itemize}
A generic $\Gamma$-framework $\tilde{G}(\vec p,\Phi)$ is minimally rigid if and only if the
associated colored quotient graph $(G,\bgamma)$ has $n$ vertices, $m$ edges and:
\begin{itemize}
\item $m = 2n + \teichgamma(\Gamma) - \cent(\Gamma)$
\item For all subgraphs $G'$ on $n'$ vertices, $m'$ edges, with connected components $G_i$ that
have $\rho$-image $\Gamma_i$,
\begin{equation}\eqlab{sparsity}
m'\le 2n' + \teichgamma(\Lambda(G')) - \sum_{i}\cent(\Gamma_i')
\end{equation}
where $\Lambda(G')$ is the translation subgroup associated with $\Gamma'_i$.
\end{itemize}
\end{theorem}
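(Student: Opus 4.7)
The plan is to deduce both directions from a careful study of the \emph{orbit-rigidity matrix} $M(G,\bgamma,\vec p,\Phi)$ obtained by linearizing the edge-length equations subject to the compatibility condition \eqref{compatibility}. Its rows are indexed by the $m$ edge orbits, and its columns by the $n$ vertex orbits (contributing $2n$ columns) together with $\teichgamma(\Gamma)$ extra columns that record infinitesimal deformations of $\Phi$ inside the representation variety of $\Gamma$ by Euclidean isometries. The kernel always contains a $\cent(\Gamma)$-dimensional subspace of trivial infinitesimal motions, coming from the centralizer of $\Phi(\Gamma)$ in $\Euc(2)$. Minimal rigidity is then equivalent to $M$ attaining the maximal rank $2n+\teichgamma(\Gamma)-\cent(\Gamma)$, which immediately gives the global edge count in the theorem.

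For the Maxwell (necessity) direction, I would apply the rank bound to each subgraph $G'\subset G$. The delicate point is that when $M$ is restricted to $G'$, a connected component $G'_i$ with $\rho$-image $\Gamma'_i$ only ``sees'' representation deformations that act nontrivially on $\Gamma'_i$, and, because rotations and reflections get pinned as soon as their fixed centers are connected into the component, only the translation part $\Lambda(G')$ contributes nontrivially to the Teichmüller term. A case analysis across the four families of $\Gamma$ shows that the trivial-motion subspace for $G'_i$ has dimension $\cent(\Gamma'_i)$; summing over components and using that the Teichmüller contribution depends only on the translation image yields the sparsity inequality \eqref{sparsity}.

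For the Laman (sufficiency) direction the plan has two steps. First, one establishes that the right-hand side of \eqref{sparsity} is the rank function of a matroid on the edge set of $(G,\bgamma)$; this is a colored-graph analogue of the $(k,\ell)$-sparsity matroid of Lovász--Yemini, the technical issue being that the $\teichgamma(\Lambda(G'))$ term varies with $G'$ through the rank of the translation image. Second, one uses Henneberg-style inductive constructions on colored graphs---appropriately colored $0$-extensions and $1$-extensions, together with edge-swap moves in a few settings---to reduce minimally rigid colored graphs to a short list of base cases whose generic orbit rigidity can be checked by hand, and then verifies that each move preserves maximal rank of $M$ at generic $(\vec p,\Phi)$. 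The main obstacle throughout is the crystallographic case: both the translation lattice and the order of the rotation part can vary across components, so the matroidal decomposition and the inductive step proliferate into significantly more subcases than in the pure translation, pure rotation, or pure reflection settings, and getting a uniform statement that covers all four families is really the content of the theorem.
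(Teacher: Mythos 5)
You should first note that this survey does not actually prove Theorem \theorefX{bigtheorem}: it is quoted from \cite{MT10,MT11,MT12}, and the paper tells you the one structural fact about those proofs that matters here, namely that they ``rely on a \emph{direction network method} (cf.\ \cite{ST10,W88}).'' Your Maxwell (necessity) direction is broadly in line with what the paper does describe: the derivation in \secref{maxwell} is exactly the accounting you sketch, with the orbit rigidity matrix having $2n$ point columns plus columns for the deformation of $\Phi$, a $\cent(\Gamma'_i)$-dimensional trivial kernel per component, and the ``technical analysis'' that identifies $\Lambda(G')$ as the subgroup whose restricted Teichm\"uller space controls the representation term. That part of your plan is the right shape, though turning the heuristic into a proof of \eqref{sparsity} still requires showing the claimed trivial motions are independent and genuinely in the kernel for each of the four families.

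The genuine gap is in your Laman (sufficiency) direction. You propose Henneberg-style inductive constructions ($0$- and $1$-extensions plus edge swaps) reducing minimally rigid colored graphs to base cases. No such inductive characterization is known for colored-Laman graphs or for the crystallographic counts, and producing one is not a routine extension of the $(k,\ell)$-sparse theory: the right-hand side of \eqref{sparsity} is not a function of $n'$ and $m'$ alone but depends on the rank of the translation image $\Lambda(G')$ and on the decomposition into connected components of \emph{arbitrary} (not just induced or connected) subgraphs. The paper itself stresses this --- see the ``Disconnected circuits'' example, where dependencies occur between edges in different connected components of the quotient --- which is precisely the phenomenon that defeats the reverse (reduction) step of a Henneberg argument: removing a vertex of degree $2$ or $3$ can change the rank of the $\rho$-image of many subgraphs at once, and there is no known case analysis showing some admissible reduction always exists. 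The cited proofs avoid this entirely: they replace length constraints by direction constraints, prove a combinatorial characterization of generic \emph{direction networks} with forced symmetry (a linear representability statement for the associated sparsity matroid), and then transfer back to lengths via a collinearity/non-degeneracy argument. If you want to complete your outline, you would either have to supply the missing inductive machinery (a substantial open combinatorial problem in its own right) or switch to the direction network route.
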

\noindent (See \secref{maxwell} for definitions of $\teichgamma$ and $\cent$.)
\theoref{bigtheorem} gives a generic rigidity theory that is:
(1) Combinatorial; (2) Computationally tractable; (3) Applicable to \emph{almost all} frameworks;
(4) Applicable to a small \emph{geometric} perturbation of \emph{all} frameworks.  In other words,
it carries \emph{all} of the key properties of the Maxwell-Laman-Theorem to the forced symmetry setting.

\subsection{Results and roadmap}  The classes of colored graphs appearing in \theoref{bigtheorem}
are a new, non-trivial, extension of the $(k,\ell)$-sparse families that had not appeared
before.  The proof of \theoref{bigtheorem} relies on a \emph{direction network method} (cf. \cite{ST10,W88}),
and the papers \cite{MT10,MT11,MT12} develop the required combinatorial theory for direction networks.  In this paper,
we focus more on frameworks, describing the colored graph invariants that correspond to ``Maxwell-type heuristics''
and showing how to explicitly compute them.  Additionally, we study periodic frameworks in a bit more detail,
and derive several new consequences of \theoref{bigtheorem}: conditions for a periodic framework to fix
the representation of $\Z^2$ (\propref{lattice-fix-implies-rk2-block}, \propref{unit-area-maxwell}), and,
as a consequence, the Maxwell-Laman-type \theoref{unit-area} for periodic frameworks with fixed area
fundamental domain.

\subsection{Notation and terminology}
We use some standard terminology for $(k,\ell)$-sparse graphs: a finite graph $G=(V,E)$ is
\emph{$(k,\ell)$-sparse} if for all subgraphs on $n'$ vertices and $m'$ edges, $m'\le kn'-\ell$.
If equality holds for all of $G$, then $G$ is a \emph{$(k,\ell)$-graph}; a subgraph for which
equality holds is a \emph{$(k,\ell)$-block} and maximal $(k,\ell)$-blocks are
\emph{$(k,\ell)$-components}.  Edge-wise minimal violations of $(k,\ell)$-sparsity are
$(k,\ell)$-circuits.  If $G$ contains a $(k,\ell)$-graph as a spanning subgraph it is
\emph{$(k,\ell)$-spanning}. A \emph{$(k,\ell)$-basis} of $G$ is a maximal subgraph that is $(k,\ell)$-sparse.
We refer to $(2,3)$-sparse graphs by their more conventional name: Laman-sparse graphs.

In the sequel, we will define a variety of hereditarily sparse \emph{colored} graph families.  We generalize
the concepts of ``\emph{sparse}'', ``\emph{block}'', ``\emph{component}'', ``\emph{basis}'' and ``\emph{circuit}'' in the
natural way for any family of colored graphs defined by a sparsity condition.

\subsection{Acknowledgements}
We thank the Fields Institute for its hospitality during the Workshop on
Rigidity and Symmetry, the workshop organizers for putting together the program,
and the conference participants for many interesting discussions. LT is supported by
the European Research Council under the European Union's Seventh Framework Programme (FP7/2007-2013)
/ ERC grant agreement no 247029-SDModels. JM is supported by NSF CDI-I grant DMR 0835586.

\section{The model and Maxwell heuristic}\seclab{maxwell}
We now briefly review the degree of freedom heuristic that leads to the sparsity
condition \eqref{sparsity}.  As is standard, we begin with the desired form:
\begin{equation}\eqlab{generic-heuristic}
\#(\text{constraints}) \le \#(\text{total d.o.f.}) - \#(\text{trivial motions})
\end{equation}
What distinguishes the forced symmetric setting is that the r.h.s. depends, in an essential
way, on the representation of $\Phi$ the symmetry group.  Thus, we modify \eqref{generic-heuristic} to
\begin{equation}\eqlab{symmetric-heuristic}
\#(\text{constraints}) \le \#(\text{total non-trivial d.o.f.}) -
\#(\text{rigid motions preserving $\Phi$})
\end{equation}

\subsection{Flexibility of symmetry groups and subgroups}
Let $\Gamma$ be a group as in \theoref{bigtheorem}.  We define the \emph{representation space} $\Rep(\Gamma)$
to be the set of all faithful representations $\Phi$ of $\Gamma$ by Euclidean isometries.
The \emph{Teichmüller space}%
\footnote{We are extending the terminology ``Teichm\"uller space''
from its more typical usage for the group $\Z^2$ and lattices in $\PSL(2, \R)$.  Our
definition of $\Teich(\Z^2)$ is non-standard since the usual one allows only unit-area fundamental domains.}
$\Teich(\Gamma)$ is the quotient $\Rep(\Gamma)/\Euc(2)$ of the representation
space by Euclidean isometries.  We define $\teich(\Gamma)$ to be the dimension of $\Teich(\Gamma)$.  For
frameworks, the Teichmüller space plays a central role, since $\teich(\Gamma)$ gives the total number of
\emph{non-trivial} degrees of freedom associated with representations of $\Gamma$.

Now let $\Gamma' < \Gamma$ be a subgroup of $\Gamma$.  The
\emph{restricted Teichmüller space} $\Teich_{\Gamma}(\Gamma')$ is the image of the
restriction map from $\Gamma\to \Gamma'$ modulo Euclidean isometries. Equivalently
it is the space of representations of $\Gamma'$ that extend to representations of $\Gamma$.
Its dimension is defined to be $\teichgamma(\Gamma)$.

The invariant $\teichgamma(\Gamma')$ measures how much of the flexibility of $\Gamma$ can be
``seen'' by $\Gamma'$.
In general, the restricted Teichmüller space of $\Gamma'$ is \emph{not}
the same as its (unrestricted) Teichmüller space.  For instance, the Teichm\"uller space $\Teich(\Z^2)$ has
dimension $3$, but the restricted Teichm\"uller space $\Teich_\Gamma(\Z^2)$ has dimension $1$
if $\Gamma$ contains a rotation of order $3$.

\subsection{Isometries of the quotient}
Now let $\Phi$ be a representation of $\Gamma$.  The \emph{centralizer} of $\Phi$ is the subgroup
of Euclidean isometries commuting with $\Phi$.  We define $\cent(\Gamma)$ to be the dimension of
the centralizer, which is independent of $\Phi$ (see e.g. \cite[Lemma 6.1]{MT11}).  An alternative
interpretation of the centralizer is that it is the isometry group of the
quotient orbifold $\R^2/\Gamma$.

\subsection{Colored graphs}\seclab{lift}
The combinatorial model for a $\Gamma$-framework is a \emph{colored graph} $(G,\bgamma)$ %
\footnote{Colored graphs are also known as ``gain graphs'' or ``voltage graphs'' \cite{Z98}.
The  terminology of colored graphs originates from \cite{R06}.}%
, which is a finite, directed graph $G=(V,E)$ and an assignment
$\bgamma = (\gamma_{ij})_{ij\in E}$ of a group element in $\Gamma$ to each edge of $G$.
The correspondence between colored graphs $(G,\bgamma)$ and graphs with a $\Gamma$-action
$(\tilde{G},\varphi)$ is a straightforward specialization of covering space theory,
and we have described the dictionary in detail in \cite[Section 9]{MT11}.  The important
facts are:
\begin{itemize}
\item The data $(\tilde{G},\varphi)$ and a selection of a representative from each vertex and edges
determine a colored graph $(G,\bgamma)$.
\item Each colored graph $(G,\bgamma)$ lifts to a graph $\tilde{G}$ with a free $\Gamma$-action by a
natural construction.
\end{itemize}
Together these mean that the colored graph $(G,\bgamma)$ captures \emph{all} the information
in $(\tilde{G},\varphi)$.

\subsection{The homomorphism $\rho$}
Let $(G,\bgamma)$ be a connected colored graph, and select a base vertex $b$ of $G$.  The coloring on the edges then
induces a natural homomorphism $\rho : \pi_1(G,b)\to \Gamma$.  For a closed path $P$ defined by the sequence of
edges $b i_2, i_2i_3, \dots, i_{\ell-1} b$, we have
$$\rho(P) = \gamma_{b i_2} \gamma_{i_2 i_3} \dots \gamma_{i_{\ell-1} b}.$$
The key properties of $\rho$ are \cite[Lemmas 12.1 and 12.2]{MT11}:
\begin{itemize}
\item The quantities $\teichgamma(\rho(\pi_1(G,b)))$ and
$\cent(\rho(\pi_1(G,b)))$ depend only on the lift $(\tilde{G},\varphi)$,
so, in particular, they are independent of the choice of $b$.
\item If $G_1, G_2,\ldots, G_c$, are the connected components of a \emph{disconnected} colored graph
$(G,\bgamma)$, there is a well-defined \emph{translation subgroup} $\Lambda(G)$ of $G$.
\end{itemize}

\subsection{Derivation of the Maxwell heuristic}
We are now ready to derive the degree of freedom heuristic for $\Gamma$-frameworks.
Let $(G,\bgamma)$ be a $\Gamma$-colored graph with $n$ vertices, $m$ edges, connected
components $G_1,G_2,\ldots, G_i$, with $\rho$-images $\Gamma'_i$.  We fill in the template
\eqref{symmetric-heuristic} for the associated $\Gamma$-framework $\tilde{G}(\vec p,\Phi)$:
\paragraph{Non-trivial degrees of freedom}
There are two sources of flexibility the group representation $\Phi$ and the coordinates
of the vertices.
\begin{itemize}
\item[\textbf{(A)}] The group representation $\Phi$ has, by definition, $\teichgamma(\Lambda(G))$
degrees of freedom, up to Euclidean isometries.  These are the non trivial degrees of $\Phi$ ``seen'' by $G$.
\item[\textbf{(B)}] The coordinates of the vertices are determined by the location of
one representative of each $\Gamma$-orbit and $\Phi$.  There are $n$ such
orbits, for a total of $2n$ degrees of freedom
\end{itemize}
Here is the guiding intuition for \textbf{(A)}.
We want to understand is how the edge lengths can constrain the representation $\Phi$.  It is
intuitively clear that if there is no pair of points $\tilde{\vec p_i}$
and $\tilde {\vec p_{\gamma \cdot i}}$ in the same $\Gamma$-orbit that are also connected
in the lift $(\tilde{G},\varphi)$, the the framework cannot constrain $\Phi$ at all.
Thus, we are interested in accounting for constraints arising from paths in $(\tilde{G},\varphi)$
between pairs of points $\tilde{\vec p_i}$ and  $\tilde {\vec p_{\gamma \cdot i}}$;
in $(G,\bgamma)$, this corresponds to a closed path $P$ with $\rho(P)=\gamma$.

This reasoning leads us to consider $\teich_\Gamma(\cdot)$ of a subgroup generated by
the $\rho$-images of some closed paths in $(G,\bgamma)$.  After some technical analysis,
the correct subgroup is discovered to be $\Trans(G)$.

\paragraph{Rigid motions independent of $\Phi$}
For each connected component of $\tilde{G}(\vec p,\Phi)$ induced by $G_i$: there is a
$\cent(\Gamma'_i)$-dimensionalspace of these for each $G_i$, since any element of the centralizer of
$\Gamma'_i$ preserves all the edge lengths and compatibility with $\Phi$.  Because the components are disconnected,
these motions are independent of each other.

\section{Periodic frameworks}\seclab{periodic}
A $\Gamma$-framework with symmetry group $\Z^2$ is called a \emph{periodic framework} \cite{BS10}.  In this
section, we specialize \eqref{sparsity} to this case, and relate it to an alternate counting
heuristic from \cite[Section 3]{MT10}.

\subsection{Invariants for $\Z^2$}
Representations of $\Z^2$ by translations have very simple coordinates: they are given by mapping each of
the generators $(1,0)$ and $(0,1)$ to a vector in $\R^2$.  Thus, the space of (possibly degenerate)
representations is isomorphic to the space of $2\times 2$ matrices with real entries.  Given such a
matrix $\vec L$ and $\gamma\in \Z^2$, the translation representing $\gamma$ is simply $\vec L\cdot \gamma$.
Because of this identification, we denote realizations of periodic frameworks by $\tilde{G}(\vec p,\vec L)$,
and call $\vec L$ the \emph{lattice representation}.

Subgroups of $\Z^2$ are always generated by $k=0,1,2$ linearly independent vectors; given a
subgroup, we define its \emph{rank} to be the minimum size of a generating set.
To specify a representation of a subgroup $\Gamma'<\Z^2$, we assign a vector in $\R^2$ to each of the
$k$ generators of $\Gamma'$.  Such a representation always extends to a faithful representation of $\Z^2$.
Thus, we see that dimension of the space of representations of $\Z^2$ restricted to $\Gamma'$ is $2k$.

The quotient of the representation space of $\Z^2$ by $\Euc(2)$ is also straightforward to describe.  Each
point has a representative $\vec L$ such that $\vec L\cdot (1,0) = (\lambda,0)$ for some real
scalar $\lambda$.  From this, we get:
\begin{prop}\proplab{z2-teich}
Let $\Gamma'<\Z^2$ be a subgroup of $\Z^2$ with rank $k$.  Then $\teich_{\Z^2}(\Gamma') = \max \{2k-1,0\}$.
\end{prop}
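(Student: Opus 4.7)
The plan is straightforward: the paragraph immediately preceding the proposition already identifies the restricted representation space with an open subset of $(\R^2)^k$ of dimension $2k$, so only the quotient by $\Euc(2)$ remains to be understood, and this amounts to showing the $\Euc(2)$-action reduces dimension by exactly $1$ when $k \geq 1$ and by $0$ when $k = 0$.

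First I would decompose $\Euc(2) \cong \R^2 \rtimes O(2)$ and observe that the translation subgroup acts trivially. This is because the image of any $\Phi \in \Rep(\Z^2)$ consists of translations, and translations of $\R^2$ commute pairwise; so conjugation by a translation fixes every element of $\Rep(\Z^2)$, and in particular fixes every restriction to $\Gamma'$. Hence only the $1$-dimensional $O(2)$ factor can act effectively.

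Next I would analyze the $O(2)$-action. Fixing generators $g_1, \dots, g_k \in \Gamma' \subset \Z^2$ presents the restricted representation as a tuple $(\vec L g_1, \dots, \vec L g_k) \in (\R^2)^k$, and $O(2)$ acts by simultaneous rotation/reflection of all $k$ vectors. When $k = 0$ this action is trivial and the quotient is a point, giving $\teich_{\Z^2}(\Gamma') = 0$. When $k \geq 1$, at least one $\vec L g_i$ is nonzero since $\vec L$ is invertible and $g_i \neq 0$, so the connected component $SO(2)$ acts with trivial stabilizer; the $O(2)$ quotient therefore drops the dimension by exactly $1$, yielding $\teich_{\Z^2}(\Gamma') = 2k - 1$.

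Combining the two cases gives $\max\{2k-1,\, 0\}$. I do not anticipate a real obstacle: the only subtle point is that the restriction map $\Rep(\Z^2) \to \Rep(\Gamma')$ is not surjective onto all of $(\R^2)^k$ but only onto the open locus where the extension to $\Z^2$ remains faithful (cut out by $\det \vec L \neq 0$), and this lower-dimensional complement has no effect on the dimension count.
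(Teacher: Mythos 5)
Your proof is correct and follows essentially the same route as the paper: the paper's normalization $\vec L\cdot(1,0)=(\lambda,0)$ is precisely a slice for the rotation action whose (near-)freeness you verify directly, with the triviality of the translation part of $\Euc(2)$ left implicit there. Your closing remark that the restriction map hits an open, full-dimensional subset of $(\R^2)^k$ matches the paper's observation that such representations extend to faithful representations of $\Z^2$, so nothing essential differs.
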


Finally, we compute the dimension of the centralizer of a subgroup $\Gamma'$.  If $\Gamma'$ is trivial, then
the centralizer is the entire $3$-dimensional Euclidean group.  If $\Gamma'$ is rank $1$, then it is represented
by a translation $t_1(\vec p)=\vec p + \vec t_1$, which commutes with other translations and reflections or glides
fixing a line in the direction $\vec t_1$.  For the rank $2$ case, the centralizer is just the translation subgroup of
$\Euc(2)$.  We now have:
\begin{prop}\proplab{z2-cent}
Let $\Gamma'<\Z^2$ be a subgroup of $\Z^2$ with rank $k$.  Then
\[
\cent(\Gamma') = \begin{cases}
3 & \text{if $k$ = 0} \\
2 & \text{if $k\ge 1$}
\end{cases}
\]
\end{prop}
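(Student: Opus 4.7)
The plan is to compute the centralizer of a faithful representation $\Phi$ of $\Gamma'$ directly, by writing a general element of $\Euc(2)$ as $g(\vec p)=A\vec p+\vec b$ with $A\in O(2)$ and $\vec b\in\R^2$, and then solving the commutation relations $g\circ\Phi(\gamma)=\Phi(\gamma)\circ g$ for every generator $\gamma$ of $\Gamma'$. A short calculation shows that if $\Phi(\gamma)$ is translation by $\vec t$, then $g\circ\Phi(\gamma)=\Phi(\gamma)\circ g$ holds if and only if $A\vec t=\vec t$; the translational part $\vec b$ is unconstrained by this equation. So the problem reduces to determining the set of $A\in O(2)$ fixing the translation vectors assigned by $\Phi$ to the generators of $\Gamma'$, and then multiplying by the $2$-dimensional freedom in $\vec b$ (except when that set of $A$'s is itself zero-dimensional, in which case the centralizer is a finite union of $2$-dimensional cosets).

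I would then handle the three ranks in turn. For $k=0$ there is no constraint on $A$, so the centralizer is all of $\Euc(2)$ and has dimension $3$. For $k=1$, faithfulness of $\Phi$ gives a nonzero vector $\vec t_1\in\R^2$, and the subgroup of $O(2)$ fixing $\vec t_1$ consists of exactly two elements: the identity and the reflection across the line $\R\vec t_1$. Combined with the $2$-dimensional freedom in $\vec b$, this realizes the centralizer as the disjoint union of a translation component and a glide-reflection component (with axes parallel to $\vec t_1$), each of dimension $2$; hence $\cent(\Gamma')=2$. For $k=2$, faithfulness gives two linearly independent vectors $\vec t_1,\vec t_2$, and the only $A\in O(2)$ fixing both is the identity, so the centralizer equals the translation subgroup of $\Euc(2)$ and has dimension $2$.

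Finally, I would observe that this computation confirms the independence from the choice of $\Phi$ (already noted in the text via \cite[Lemma 6.1]{MT11}): the only feature of $\Phi$ that enters is the rank of the linear span of the generator images, which is forced to equal $k$ by faithfulness. Essentially no step is hard; the only mild care needed is in the rank $1$ case, where one must remember that although the stabilizer of $\vec t_1$ in $O(2)$ is finite, the centralizer in $\Euc(2)$ still has positive dimension coming entirely from the translational component $\vec b$.
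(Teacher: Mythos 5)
Your argument is correct and takes essentially the same approach as the paper's own justification, which appears as the informal paragraph immediately preceding the proposition: rank $0$ gives all of $\Euc(2)$, rank $1$ gives translations together with reflections and glides along the direction of $\vec t_1$, and rank $2$ gives only the translation subgroup. Your explicit reduction of the commutation relation to the condition $A\vec t=\vec t$ is simply a more detailed write-up of that same computation.
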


\subsection{The homomorphism $\rho$ for $\Z^2$}
Now we turn to associating a colored graph $(G,\bgamma)$ with a subgroup of $\Z^2$.
This is simpler than the general case because $\Z^2$ is abelian, so we may define it as a
map $\rho : H_1(G,\Z)\to \Z^2$, as was done in \cite{MT10}.  Here are the relevant facts:
\begin{prop}\proplab{rho-for-z2}
Let $(G,\bgamma)$ be a colored graph.  Then the rank of the $\rho$-image is
determined by the values of $\rho$ on \emph{any} homology (alternatively, cycle) basis
of $G$, and thus $\rho$ is well-defined when $G$ has more than one connected component.
\end{prop}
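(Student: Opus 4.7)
The plan is to exploit the fact that $\Z^2$ is abelian, which allows the homomorphism $\rho$ to factor through the first homology group $H_1(G,\Z)$, making the choice of base vertex irrelevant and allowing $\rho$ to be assembled coherently across connected components.

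First, I fix a connected component $G_i$ of $G$ and a base vertex $b_i$, and define $\rho_i:\pi_1(G_i,b_i)\to \Z^2$ by summing (in $\Z^2$) the edge colors along closed paths, exactly as in the general definition. Since $\Z^2$ is abelian, $\rho_i$ vanishes on the commutator subgroup of $\pi_1(G_i,b_i)$, and so descends to a homomorphism $\bar\rho_i : H_1(G_i,\Z)\to \Z^2$ with the same image. I then verify that $\bar\rho_i(H_1(G_i,\Z))$ is independent of $b_i$ in two equivalent ways: on one hand, changing base vertex conjugates $\rho_i$ by an element of $\Z^2$, which is trivial; on the other hand, choosing any spanning tree $T_i$ of $G_i$ and taking the fundamental cycles of the non-tree edges produces a $\Z$-basis of $H_1(G_i,\Z)$ whose $\rho$-values are determined by the coloring $\bgamma$ alone, with no reference to $b_i$. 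The rank of the image is then the rank of the subgroup of $\Z^2$ generated by the $\rho$-values on any such basis, which is the same for any basis by elementary linear algebra over $\Z$.

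Second, I handle the disconnected case by invoking the decomposition $H_1(G,\Z)=\bigoplus_i H_1(G_i,\Z)$, where the sum runs over the connected components of $G$. Assembling the maps $\bar\rho_i$ yields a single well-defined homomorphism $\bar\rho:H_1(G,\Z)\to \Z^2$ whose image is the subgroup of $\Z^2$ generated by the union of the component images. Since a homology (equivalently, cycle) basis of $G$ is a disjoint union of bases for the $H_1(G_i,\Z)$, the rank of the full $\rho$-image is determined by the $\rho$-values on any such basis, which is the content of the proposition.

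The step requiring the most care is the equivalence between the two notions of ``well-defined'' across the passage from the general setup to the $\Z^2$ case: in the non-abelian setting of \theoref{bigtheorem}, the $\rho$-image of a subgraph is only well-defined up to conjugation in $\Gamma$, and there is no canonical way to combine images coming from different connected components with different base vertices. The content of the proposition is that abelianness of $\Z^2$ collapses this ambiguity entirely. Everything else in the argument is routine algebraic topology of graphs, so no substantive obstacle arises.
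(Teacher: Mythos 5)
Your proof is correct and follows the same route the paper implicitly takes: the proposition is stated there as a fact immediately after the observation that, since $\Z^2$ is abelian, $\rho$ factors through $H_1(G,\Z)$, and your write-up simply fills in the routine details (base-point independence because conjugation in $\Z^2$ is trivial, the decomposition $H_1(G,\Z)=\bigoplus_i H_1(G_i,\Z)$ over connected components, and basis-independence of the rank of the image subgroup). No gaps.
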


\subsection{Colored-Laman graphs}
With Propositions \proprefX{z2-teich}--\proprefX{rho-for-z2}, the colored graph sparsity counts
\eqref{sparsity} from \theoref{bigtheorem} specializes, for a $\Z^2$-colored graph to:
\begin{equation}\eqlab{z2-sparsity}
m' \le 2n' + \max\{2k-1,0\} - 3c'_0 - 2c'_{\ge 1}
\end{equation}
where $k$ is the rank of the $\Z^2$-image of $(G,\bgamma)$, $c'_0$ is the number of connected components with trivial
$\Z^2$-image and $c'_{\ge 1}$ is the number of connected components with non-trivial $\Z^2$-image (i.e., $k\ge 1$).
This gives a matroidal family \cite[Lemma 7.1]{MT10}, and we define the bases to be \emph{colored-Laman graphs}.

\subsection{An alternative sparsity function}
A slightly different counting heuristic for a periodic framework with colored quotient graph $(G,\bgamma)$
having $n$ vertices, $m$ edges, $c$ connected components and $\rho$-image with rank $k$ is as follows:
\begin{itemize}
\item There are $2n$ variables specifying the points, and $2k$ variables giving a representation of the $\rho$-image.
\item To remove Euclidean isometries  that move the points and the lattice representation together, we
pin down a connected component.
\item Each of the remaining connected components may translate independently of each other.
\end{itemize}
Adding up the degrees of freedom and subtracting three degrees of freedom for pinning down one connected components
and two each for translations of each other connected component yields the sparsity condition from \cite[Section 3, p. 14]{MT10}
\begin{equation}\eqlab{z2-alt-sparsity}
m' \le 2(n'+k) - 3 - 2(c'-1)
\end{equation}
which is equivalent to the colored-Laman counts \eqref{z2-sparsity} by the following.
\begin{prop}\proplab{z2-sparsity-equiv}
Let $(G,\bgamma)$ be a $\Z^2$-colored graph.  Then $(G,\bgamma)$ satisfies \eqref{z2-sparsity} if and only
if it satisfies \eqref{z2-alt-sparsity}.
\end{prop}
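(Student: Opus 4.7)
My plan is to view the two inequalities as bounds applied uniformly to every subgraph of $G$ and show that they cut out the same matroidal family. Let me write $B_{CL}(G')$ and $B_{Alt}(G')$ for the right-hand sides of \eqref{z2-sparsity} and \eqref{z2-alt-sparsity} respectively, where $n', m', c', c'_0, c'_{\ge 1}$ and the rank $k$ of the $\rho$-image are computed from the subgraph $G'$.

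For the direction \eqref{z2-sparsity} $\Rightarrow$ \eqref{z2-alt-sparsity}, it suffices to prove the pointwise inequality $B_{CL}(G') \le B_{Alt}(G')$, which I would verify by direct expansion in the two cases $k = 0$ (where $c'_0 = c'$, $c'_{\ge 1} = 0$, and $B_{Alt} - B_{CL} = c' - 1$) and $k \ge 1$ (where $B_{Alt} - B_{CL} = c'_0$). Both differences are non-negative for non-empty subgraphs, so the implication follows.

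The converse requires more care, because $B_{Alt}(G')$ is strictly larger than $B_{CL}(G')$ whenever $G'$ has components with trivial $\rho$-image, so applying \eqref{z2-alt-sparsity} directly to $G'$ leaves a slack of exactly $c'_0$. The remedy is to exploit the assumption that \eqref{z2-alt-sparsity} holds for \emph{every} subgraph, and apply it along a carefully chosen decomposition: each trivial connected component $G'_i$ of $G'$ taken as its own subgraph (yielding $m'_i \le 2n'_i - 3$), together with the subgraph $G''$ formed by the union of the non-trivial components of $G'$ (yielding $m'' \le 2n'' + 2k - 1 - 2c'_{\ge 1}$, where one uses that $G''$ has the same $\rho$-image as $G'$ and hence the same rank $k$). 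Summing these $c'_0 + 1$ bounds and using $n' = n'' + \sum_{k_i = 0} n'_i$ collapses to exactly $B_{CL}(G')$ in the case $k \ge 1$; the case $k = 0$ is handled by applying \eqref{z2-alt-sparsity} to each connected component separately.

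The main subtlety, such as it is, lies in recognizing that one must not feed $G'$ to \eqref{z2-alt-sparsity} monolithically but rather segregate trivial components as separate subgraphs before invoking the hypothesis; this decomposition is what converts the $-2(c'-1)$ term in \eqref{z2-alt-sparsity} into the mixed $-3c'_0 - 2c'_{\ge 1}$ term of \eqref{z2-sparsity}. Beyond choosing that decomposition, the argument is essentially a bookkeeping exercise, so I do not anticipate a genuine combinatorial obstacle.
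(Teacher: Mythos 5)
Your proposal is correct, and while the forward direction coincides with the paper's (the pointwise comparison of the two right-hand sides, with slack $c'-1$ when $k=0$ and $c'_0$ when $k\ge 1$, is exactly the paper's observation that $f(G)\le g(G)$ in the notation of \eqref{z2-f-fun}--\eqref{z2-g-fun}), your converse takes a genuinely different route. You fix an arbitrary subgraph $G'$, apply \eqref{z2-alt-sparsity} separately to each connected component with trivial $\rho$-image (each giving $m'_i\le 2n'_i-3$) and to the union $G''$ of the nontrivial components (which inherits the full rank $k$, since $\Lambda(G')$ is generated by the $\rho$-images of the components and trivial ones contribute nothing), and sum; the $c'_0$ units of recovered slack convert the uniform $-2$ per component into $-3$ per trivial component, landing exactly on \eqref{z2-sparsity}. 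The paper instead argues by minimal counterexample: it takes $(G,\bgamma)$ edge-minimal with $f(G)=m-1$, shows via the same bound $m_i\le 2n_i-3$ that deleting a trivial-image component would yield a smaller violator, and concludes that a minimal $f$-violator has one component or no trivial components, hence satisfies $f=g$ and violates \eqref{z2-alt-sparsity} too. Both arguments turn on the identical insight---the two counts disagree only on trivial components, each of which individually obeys the Laman count under either convention---but yours is direct (every violation of \eqref{z2-sparsity} pinpoints a piece of the decomposition violating \eqref{z2-alt-sparsity}) where the paper's is by contradiction via minimality; yours is arguably cleaner. The only points worth making explicit in a final write-up are that each trivial component of $G'$ and the union $G''$ are themselves subgraphs of $G$, so the hypothesis legitimately applies to them, and that $c'_{\ge 1}\ge 1$ whenever $k\ge 1$, so $G''$ is nonempty in that case.
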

\begin{proof}
For convenience, we define the two functions:
\begin{eqnarray}
\eqlab{z2-f-fun}	f(G) & = & 2n +  \max\{2k-1,0\} - 3c'_0 - 2c'_{\ge 1} \\
\eqlab{z2-g-fun}	g(G) & = & 2(n+k- c) - 1
\end{eqnarray}
where $g$ is easily seen to be equal to the r.h.s. of \eqref{z2-alt-sparsity}.  The
definitions imply readily that $f(G)\le g(G)$, with equality when there is either
one connected component in $G$ or all connected components have $\rho$-images with
rank at least one.  Thus, it will be sufficient to show that, if $(G,\bgamma)$, has $n$ vertices, $m$ edges,
and $\rho$-image of rank $k$, and it is minimal with the property that
$f(G) = m - 1$, then $g(G) = m - 1$.

Let $(G,\bgamma)$ have these
properties, and let $G$ have connected components $G_i$ with, $n_i$ vertices, $m_i$ edges, and
$\rho$-images of rank $k_i$.
The minimality hypothesis implies that for any $G_i$, the number of edges
in $G\setminus G_i$ is
\begin{equation}
\eqlab{sparsity-eqv-pf}
m - m_i  \le  f(G\setminus G_i)
\end{equation}
but, if $k_i$ is zero, the rank of the $\rho$-image of $G\setminus G_i$ is $k$,
and $m_i \le 2n_i - 3$. Computing, we find that
\begin{align*}
m - m_i  & \ge 2n +  \max\{2k-1,0\} - 3c'_0 - 2c'_{\ge 1} + 1 - 2n_i + 3\\
&  = 2(n-n_i) + \max\{2k-1,0\} - 3(c'_0 - 1) - 2c'_{\ge 1} + 1\\
&  = f(G\setminus G_i) + 1
\end{align*}
which is a contradiction to \eqref{sparsity-eqv-pf}.
We conclude that either there is one connected component in $G$
or that none of the $k_i$ were zero.  In either of these cases $f(G)=g(G)$, which completes
the proof.
\end{proof}

\subsection{Example: Disconnected circuits}
The proof of \propref{z2-sparsity-equiv} generalizes the folklore fact that, for Laman rigidity,
we get the same class of graphs from ``$m'\le 2n' -3$'' and the more precise ``$m'\le 2n' - 3c'$''.
In the periodic setting the additional precision is \emph{required}:
\begin{itemize}
\item There are periodic frameworks with \emph{dependent} edges in different connected components of the colored
quotient graph \cite[Figure 20]{MT10}.
\item There are \emph{connected} $\Z^2$-colored graphs that are not colored-Laman sparse but satisify
\eqref{z2-sparsity} for \emph{all} induced or connected subgraphs \cite[Figure 8]{MT10}.
\end{itemize}
The intuition leading to the discovery of \eqref{z2-alt-sparsity} is that connected components of a
periodic framework's colored graph interact via the representation $\vec L$ when they have the same
$\rho$-image.

\subsection{Example: Disconnected minimally rigid periodic frameworks}
Another phenomenon associated with periodic rigidity that is not seen in finite
frameworks is that although the colored quotient graph $(G,\bgamma)$ must be
connected \cite[Lemmas 4.2 and 7.3]{MT10}, the periodic framework $\tilde{G}(\vec p,\Phi)$
does not need to be as in \cite[Figure 9]{MT10}.  To see this, we simply note that
\eqref{z2-sparsity} depends only on the \emph{rank} of the $\rho$-image, which is unchanged
by multiplying the entries of the colors $\gamma_{ij}$ on the edges $(G,\bgamma)$ by an integer $q$.
On the other hand, this increases the number of connected components by a factor of $q^2$.
There is not paradox because periodic symmetry is being forced:
once we know the realization of one connected component of $\tilde{G}(\vec p,\Phi)$, we can
reconstruct the rest of them from the representation $\Phi$ of $\Z^2$.

\subsection{Conditions for fixing the lattice}\seclab{periodic-lattice-fix}
The definition of rigidity for periodic frameworks implies that a rigid framework fixes
the representation $\vec L$ of $\Z^2$ up to a Euclidean isometry.  It then follows that
\emph{any} periodic framework with a non-trivial \emph{rigid component} must do the same.
However, this is not the only possibility.  \figref{lattice-fix-not-rigid} shows a framework
without a rigid component that fixes the lattice representation and its associated colored graph.
The framework's non-trivial motion is a rotation of each of
the triangles.  This example is part of a more general phenomenon.
\begin{figure}[htbp]
\centering
\subfigure[]{\includegraphics[width=.3\textwidth]{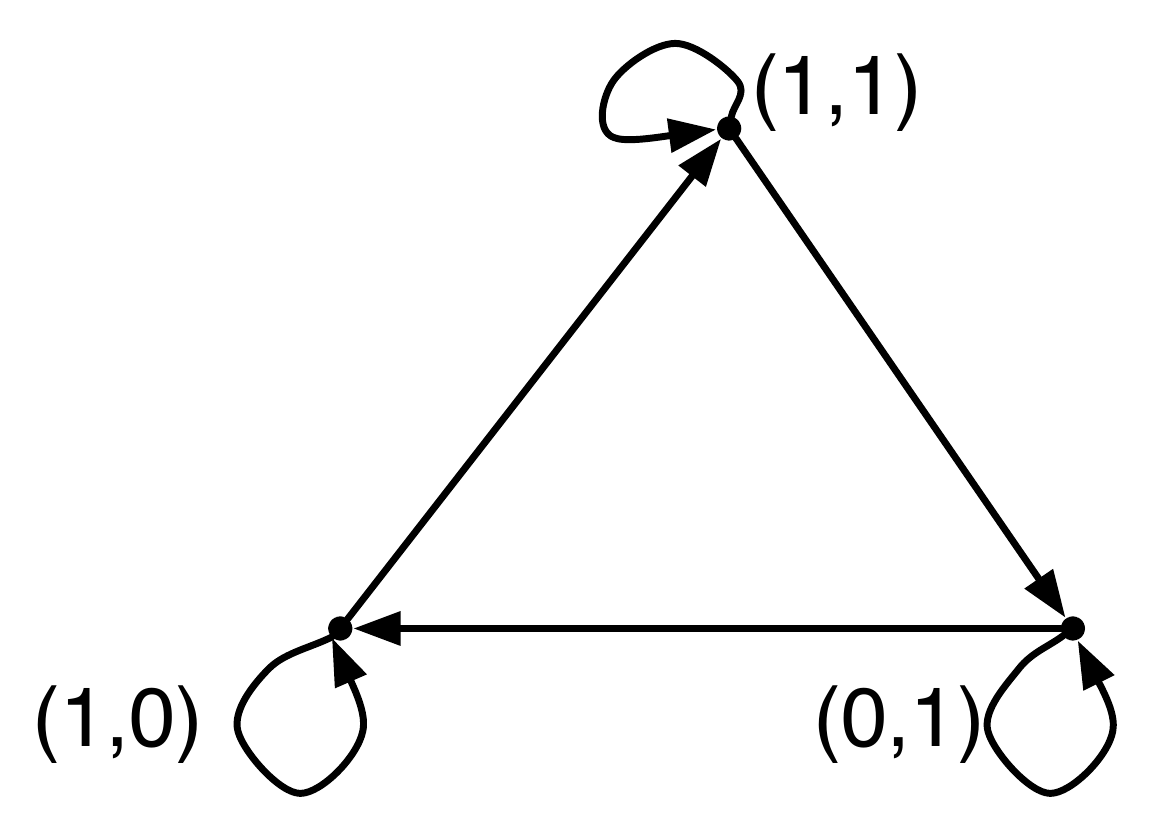}}
\subfigure[]{\includegraphics[width=.3\textwidth]{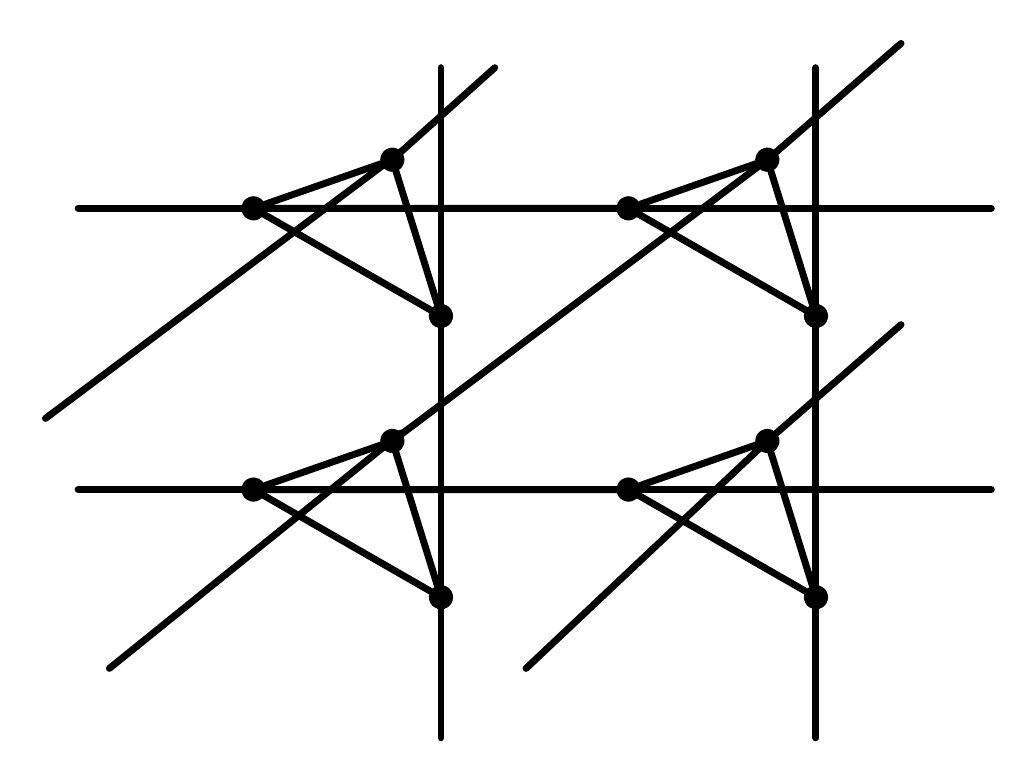}}
\caption{A flexible periodic framework that determines the lattice representation:
(a) the associated colored graph; (b) the periodic framework.}
\label{fig:lattice-fix-not-rigid}
\end{figure}
\begin{prop}\proplab{lattice-fix-implies-rk2-block}
Suppose that $(G,\bgamma)$ is a colored graph such that an associated generic
periodic framework $\tilde{G}(\vec p,\Phi)$ fixes the lattice representation.
Then $(G,\bgamma)$ contains a subgraph $G'$ with $m$ edges and rank $2$ $\rho$-image
such that $m = f(G')$, where $f$ is the sparsity function defined in \eqref{z2-f-fun}.
\end{prop}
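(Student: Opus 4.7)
My plan is to combine \theoref{bigtheorem} in its matroidal form with a geometric ``escape motion'' argument for the contrapositive. By \theoref{bigtheorem} the generic periodic rigidity matroid on $(G,\bgamma)$ coincides with the colored-Laman matroid, so minimally rigid sub-frameworks correspond to colored-Laman blocks---subgraphs $G' \subseteq G$ with $m' = f(G')$. A minimally rigid sub-framework on such a block with rank-$k'$ $\rho$-image pins down $\vec L \cdot \gamma$ for every $\gamma \in \Lambda(G')$ up to Euclidean isometry, hence $\max\{2k'-1,0\} = \teich_{\Z^2}(\Lambda(G'))$ of the three non-trivial lattice dimensions (\propref{z2-teich}); in particular a rank-$2$ block forces lattice-fixing, which gives the ``easy'' converse to the statement and motivates the claim.

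To obtain the direction we want, I would argue the contrapositive. Assume no colored-Laman block of $(G,\bgamma)$ has rank-$2$ $\rho$-image. If $\Lambda(G)$ itself has rank $\le 1$, then no edge of $G$ constrains the second independent direction of $\vec L$, yielding an obvious non-trivial lattice-changing motion. Otherwise $\Lambda(G)$ has rank $2$ but is witnessed only by blocks of rank $\le 1$; I would take two maximal colored-Laman blocks $H_1, H_2 \subseteq G$ with rank-$1$ $\rho$-images generated by linearly independent $\gamma_1, \gamma_2 \in \Z^2$ and construct an infinitesimal motion rotating $H_1$ by $\theta_1$ and $H_2$ by $\theta_2 \ne \theta_1$ about chosen fixed points, while updating the lattice by $\dot{\vec L}\gamma_i = \theta_i J \vec L \gamma_i$ (with $J$ the infinitesimal rotation generator). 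A direct computation verifies this is an infinitesimal motion of the sub-framework on $H_1 \cup H_2$; with $\theta_1 \ne \theta_2$ the resulting $\dot{\vec L}$ is non-trivial, contradicting lattice-fixing.

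The main obstacle is extending this escape-motion construction to the full graph $G$, which may contain many additional rank-$\le 1$ blocks interacting with $H_1, H_2$ in complicated ways (overlapping vertices, aligned $\Lambda$s, or near-unions that almost form rank-$2$ blocks). I would organize the argument matroidally: show that the ``lattice-contribution'' of $(G,\bgamma)$ decomposes according to its block structure, so that in the absence of a rank-$2$ block the total contribution is at most a sum of $\teich_{\Z^2}(\Lambda(H)) \le 1$ over rank-$\le 1$ blocks, which is strictly less than the three lattice dimensions needed for lattice-fixing. Establishing this bound cleanly---likely via submodularity of $f$ combined with careful handling of blocks sharing vertices, and using that whenever the union of two rank-$1$ tight pieces is again tight it is already a rank-$2$ block---is the technical crux of the proof.
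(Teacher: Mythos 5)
There is a genuine gap. Your argument for the direction actually being claimed is the contrapositive ``escape motion'' construction, and you yourself flag its crux---extending the motion built on $H_1\cup H_2$ to all of $G$---as unestablished. That extension is not a routine technicality: the infinitesimal motion must satisfy the length equations of \emph{every} edge of $G$ simultaneously, the blocks $H_1$ and $H_2$ may share vertices (so they cannot be rotated by different angles independently), and once $\dot{\vec L}$ is pinned down on two linearly independent colors it is determined everywhere, so all remaining rank-$1$ pieces must happen to be compatible with it. Your proposed fix via ``submodularity of $f$'' and a decomposition of the lattice contribution over blocks is exactly the part that would need to be proved, and as stated the proposal does not prove it. (Your first paragraph, the claim that a tight rank-$2$ subgraph forces lattice-fixing, is the converse direction---it is \propref{unit-area-maxwell} in substance---and does not help with the implication at hand.)

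The paper's proof sidesteps all of this with a short combinatorial reduction you are missing. Assume without loss of generality that $(G,\bgamma)$ is colored-Laman sparse, and choose $\eta\in\Z^2$ linearly independent of the $\rho$-image of every rank-$1$ subgraph (possible since only finitely many lines in $\Z^2$ arise this way). Because the generic framework fixes the lattice representation, the length constraint of a self-loop $\ell$ colored $\eta$ is generically dependent, so by \theoref{bigtheorem} the graph $G+\ell$ is \emph{not} colored-Laman sparse. A minimal violating subgraph $G'+\ell$ must contain $\ell$, and $G'$ must have rank-$2$ $\rho$-image: otherwise adding $\ell$ would strictly increase the rank (by the choice of $\eta$), raising the right-hand side of \eqref{z2-sparsity} and preventing any violation. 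This $G'$ is the tight rank-$2$ subgraph required. If you want to salvage your approach, you would essentially be reproving the hard direction of \theoref{bigtheorem} by hand; invoking it through the self-loop dependency is the intended shortcut.
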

\begin{proof}
We may assume without loss of generality that $(G,\bgamma)$ is colored-Laman sparse.
Let $\eta\in \Z^2$ be a vector that is linearly independent of any $\rho$-image of any rank $1$ subgraph of $(G,\bgamma)$.
(Such $\eta$ exists since there are only finitely many induced such lines in $\Z^2$.)
It follows from \theoref{bigtheorem} and the hypothesis that $\tilde{G}(\vec p,\Phi)$ is generic and
fixes the lattice representation that adding a self-loop $\ell$ with
color $\eta$ leads to a colored graph that is not colored-Laman-sparse.  This implies that there is a
minimal subgraph $(G'+\ell,\bgamma)$ of $(G+\ell,\bgamma)$ that is not colored-Laman sparse.  This
$G'$ must contain $\ell$.  The $\rho$-image of $G'$ must be rank $2$ because, if it were not, the rank of
the $\rho$-image of $G'+\ell$ would be strictly larger and thus not violate the sparsity condition.
The graph $G'$ is the subgraph of $(G,\bgamma)$ required by the statement of the proposition.
\end{proof}

\section{Specializations of periodic frameworks}\seclab{specializations}
Because \theoref{bigtheorem} is quite general, we can  deduce
Laman-type theorems for many restricted versions of periodic frameworks
from \theoref{bigtheorem}.  In this section, we describe three of these in
detail and discuss connections with some others.

\subsection{The periodic rigidity matrix}
The proof of \theoref{bigtheorem} relies on giving a combinatorial characterization
of \emph{infinitesimal rigidity} with forced symmetry constraints. The
\emph{rigidity matrix}, which is the formal differential of the length equations
plays an important role.  For periodic frameworks, this has the following form, which was
first computed in \cite{BS10}:
\begin{equation}\eqlab{rigidity-matrix}
\bordermatrix{                   &              &     i       &             &     j      &            &    L_1   & L_2  \cr
& \dots & \dots & \dots & \dots & \dots & \dots  & \dots \cr
ij   &  \dots & -\eta_{ij} & \dots & \eta_{ij} & \dots & \gamma_{ij}^1 \eta_{ij} & \gamma_{ij}^2 \eta_{ij} \cr
& \dots & \dots & \dots & \dots & \dots & \dots & \dots  }
\end{equation}
Here $\eta_{ij} = (\vec p_j + \vec L\cdot \gamma_{ij} - \vec p_i)$ is the vector describing a
representative of an edge orbit in $\tilde{G}(\vec p,\vec L)$, which we identify with a
colored edge of the quotient $(G,\bgamma)$.  There is one row for each edge in the quotient graph $G$.
The column groups $L_1$ and $L_2$ correspond to the
derivatives with respect to the variables in the rows of
$\vec L = \begin{pmatrix} a & b \\ c & d\end{pmatrix}$.  A framework is \emph{infinitesimally rigid}
if the rigidity matrix has corank $3$, and \emph{infinitesimally flexible} with $d$ degrees of freedom
if the rigidity matrix has corank $3+d$.
A framework is \emph{generic} if the rank of the rigidity matrix is maximal over
all frameworks with the same colored quotient graph.
We will require some standard facts about infinitesimal rigidity that transfer from the finite
to the periodic setting.
\begin{prop}\proplab{infinitesimal-rigidity-fact}
Let $\tilde{G}(\vec p,\vec L)$ be a periodic framework with quotient graph $(G,\bgamma)$.  Then:
\begin{itemize}
\item For generic frameworks, infinitesimal rigidity and flexibility coincide with rigidity and flexibility \cite{BS10,MT10}.
\item Infinitesimal rigidity and flexibility are affinely invariant \cite{BS10}, with non-trivial infinitesimal motions
mapped to non-trivial infinitesimal motions.
\end{itemize}
\end{prop}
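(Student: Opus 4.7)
My plan is to transfer the standard Asimow--Roth style arguments from finite frameworks to the periodic setting, closely following the proofs in \cite{BS10,MT10}.

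For the first claim, I would set up the length-squared map $f \colon (\vec p, \vec L) \mapsto (|\eta_{ij}|^2)_{ij \in E}$ from the configuration space $\R^{2n} \times \R^4$ of periodic frameworks with quotient $(G,\bgamma)$ to $\R^m$. Its Jacobian is, up to a factor of $2$, precisely the rigidity matrix in \eqref{rigidity-matrix}. The set of $(\vec p, \vec L)$ where this matrix drops rank is Zariski closed, so at a generic framework the rank is maximal. By the implicit function theorem, $f^{-1}(f(\vec p,\vec L))$ is locally a smooth manifold whose dimension equals $\operatorname{corank}(R)$, where $R$ denotes the rigidity matrix. The orbit of the trivial Euclidean motions (translations of $\vec p$, and simultaneous infinitesimal rotation of $\vec p$ and $\vec L$) is always contained in this fiber and has dimension $3$. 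If the framework is infinitesimally rigid, i.e.\ $\operatorname{corank}(R) = 3$, the fiber coincides locally with this orbit and the framework is rigid; conversely, any infinitesimal flex transverse to the trivial orbit lifts, via smoothness of the fiber, to a nontrivial continuous motion.

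For the second claim, I would apply an affine map with linear part $\tilde A \in \operatorname{GL}(2,\R)$, sending $\vec p_i \mapsto A\vec p_i$ and $\vec L \mapsto \tilde A \vec L$. Because $\eta_{ij}$ is translation-invariant, it becomes $\tilde A \eta_{ij}$ in the transformed framework. A direct computation then shows that the infinitesimal rigidity constraint $\eta_{ij} \cdot (\dot{\vec p}_j - \dot{\vec p}_i + \dot{\vec L}\gamma_{ij}) = 0$ is equivalent, under the substitution $\dot{\vec p}{}'_i = \tilde A^{-\top}\dot{\vec p}_i$, $\dot{\vec L}{}' = \tilde A^{-\top}\dot{\vec L}$, to the corresponding constraint for the transformed framework, since $(\tilde A \eta_{ij}) \cdot (\tilde A^{-\top} v) = \eta_{ij}\cdot v$ for any $v$. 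Hence this substitution is a linear isomorphism between the two infinitesimal motion spaces, so the ranks and coranks of the two rigidity matrices coincide. To show that trivial motions correspond to trivial motions, I would check that an infinitesimal translation $(\vec t,0)$ maps to $(\tilde A^{-\top}\vec t, 0)$, while an infinitesimal rotation $(R\vec p_i, R\vec L)$ with $R$ antisymmetric maps to $(R'\vec p{}'_i, R'\vec L{}')$ where $R' = \tilde A^{-\top} R \tilde A^{-1}$ is again antisymmetric; thus non-trivial motions correspond to non-trivial motions, and flexibility is preserved.

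The main technical obstacle is the first claim: being precise about what ``generic'' means in the configuration space $\R^{2n} \times \R^4$, where the lattice $\vec L$ is part of the variable data rather than fixed, and ensuring that the fiber of $f$ near a generic point is \emph{globally} foliated by trivial-motion orbits, not merely tangent to them at first order. This is the periodic analogue of the Asimow--Roth theorem and is exactly what is carried out in detail in \cite{BS10,MT10}; my plan would reference those arguments rather than repeat them. The second claim, by contrast, reduces to a bookkeeping calculation once the substitution $\tilde A^{-\top}$ is identified.
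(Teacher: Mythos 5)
The paper offers no proof of this proposition at all—it is stated as a pair of known facts imported from \cite{BS10,MT10}—so there is nothing to diverge from, and your sketch correctly reproduces the arguments those references use: the Asimow–Roth-type constant-rank/implicit-function argument for the first bullet, and the $\vec A^{*}=\vec A^{-\top}$ substitution for the second (which is exactly the transformation the paper itself invokes later, in the Laman direction of \theoref{unit-area} and in \propref{unit-area-laman}). Both computations check out, including the verification that antisymmetric $R$ maps to antisymmetric $\vec A^{-\top}R\vec A^{-1}$, so trivial motions go to trivial motions; deferring the genericity and fiber-foliation details to \cite{BS10,MT10} is consistent with what the paper does.
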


\subsection{One flexible period}\seclab{cylinder}
A very simple restriction of the periodic model is to consider frameworks with one flexible
period.  The symmetry group is then $\Z$, acting on the plane by translations; we call such a framework a \emph{cylinder framework}.
We model the situation with $\Z$-colored graphs, and a single vector $\vec l\in \R^2$ representing the period lattice.
In this case, the $\rho$-image of a colored graph always has rank zero or one.

We define a \emph{cylinder-Laman graph} to be a $\Z$-colored graph $(G,\bgamma)$ such that:
$G$ has $n$ vertices, $2n - 1$ edges, and satisfies,
for all subgraphs, on $n'$ vertices, $m'$ edges, $\rho$-image of rank $k$,
$c'_0$ connected components with trivial $\rho$-image, and
$c'_1$ connected components with non-trivial $\rho$-image:
\begin{equation}\eqlab{cylinder-sparsity}
m'\le 2n' + k - 3c'_0 - 2c'_1
\end{equation}
Comparing \eqref{cylinder-sparsity} with \eqref{z2-sparsity}, we see readily:
\begin{prop}\proplab{cylinder-laman-sparsity}
The family of cylinder-Laman graphs corresponds bijectively with the maximal colored-Laman sparse graphs that
have colors of the form $\gamma_{ij} = (\cdot,0)$.
\end{prop}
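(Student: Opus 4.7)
The plan is to take the bijection to be the natural one induced by the inclusion $\iota: \Z \hookrightarrow \Z^2$, $n \mapsto (n, 0)$: a $\Z$-colored graph $(G, \bgamma)$ is sent to the $\Z^2$-colored graph with the same underlying directed graph and color $(\gamma_{ij}, 0)$ on each edge. This is manifestly a set-theoretic bijection between $\Z$-colored graphs and $\Z^2$-colored graphs whose every edge color lies in $\Z \times \{0\}$, so the proof reduces to showing that it restricts to a bijection between cylinder-Laman graphs and maximal colored-Laman sparse $(\cdot, 0)$-colored graphs.

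The main computational step is the sparsity equivalence. If every edge color of a subgraph $G'$ lies in $\Z \times \{0\}$, then the $\rho$-image of $G'$ lies in the same subgroup and therefore has rank $k \in \{0, 1\}$. In this range $\max\{2k - 1, 0\} = k$, and the connected components of $G'$ with non-trivial $\Z^2$-image are precisely those with non-trivial $\Z$-image, so $c'_{\ge 1} = c'_1$. Hence the right-hand sides of \eqref{z2-sparsity} and \eqref{cylinder-sparsity} agree term-for-term on every such subgraph, and the two sparsity notions coincide on the $(\cdot, 0)$-colored universe.

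It then remains to match the edge-count and maximality conditions. Applying \eqref{cylinder-sparsity} to the whole graph yields $2n - 1 \le 2n + k - 3c_0 - 2c_1$, which forces $k = 1$ and $c_0 = 0$, $c_1 = 1$; so every cylinder-Laman graph is connected with rank-$1$ $\rho$-image. Adjoining any further $(\cdot, 0)$-colored edge keeps $k \le 1$ and the graph connected, producing $m = 2n$ in violation of the same bound, so the $\iota$-image of a cylinder-Laman graph is maximal colored-Laman sparse. Conversely, colored-Laman sparsity is matroidal (\cite[Lemma 7.1]{MT10}), so its restriction to $(\cdot, 0)$-colored edges is matroidal as well, and all maximal such graphs on a fixed vertex set have a common edge count; exhibiting a single cylinder-Laman graph for each $n$ (for instance, the $n = 2$ graph on two vertices with three parallel edges of colors $0, 1, -1$, extended for larger $n$ by attaching each further vertex via two color-$0$ edges) pins this common value at $2n - 1$, and the sparsity equivalence then certifies that the maximal graph is cylinder-Laman. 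The main, and rather mild, obstacle is the observation that adjoining a $(\cdot, 0)$-colored edge cannot push the $\rho$-image rank above $1$, so that \eqref{z2-sparsity} continues to reduce to the cylinder-Laman bound on the enlarged graph; the rest is direct bookkeeping.
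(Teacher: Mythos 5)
Your proof is correct and takes the same route the paper does: the paper disposes of this proposition with the single remark that comparing \eqref{cylinder-sparsity} with \eqref{z2-sparsity} ``we see readily'' the correspondence, and your argument is exactly that comparison carried out in detail (the identity $\max\{2k-1,0\}=k$ for $k\le 1$, plus the edge-count and maximality bookkeeping via matroidality). No discrepancies to report.
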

\begin{theorem}\theolab{cylinder}
A generic cylinder framework is minimally rigid if and only if its associated colored graph is
cylinder-Laman.
\end{theorem}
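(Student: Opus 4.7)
\textbf{The plan} is to reduce the theorem to Theorem \theorefX{bigtheorem} via the bijection in \propref{cylinder-laman-sparsity}. Given a $\Z$-colored graph $(G,\bgamma)$, form the $\Z^2$-colored graph $(G,\bgamma')$ with $\bgamma'_{ij}=(\gamma_{ij},0)$, and given a cylinder realization $\tilde{G}(\vec p,\vec l)$ pick any $\vec l_2\in\R^2$ linearly independent from $\vec l$, producing the periodic realization with lattice $\vec L=(\vec l\mid \vec l_2)$. Because every color's second component is zero, the $L_2$ block of the periodic rigidity matrix in \eqref{rigidity-matrix} vanishes identically, so the cylinder rigidity matrix and the periodic rigidity matrix have the same rank.

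\textbf{Next}, I would do the degree-of-freedom bookkeeping. The cylinder rigidity matrix has $2n+2$ columns and a three-dimensional trivial-motion kernel spanned by two translations and one rotation, so the generic rank is at most $2n-1$. By the standard argument from \propref{infinitesimal-rigidity-fact}, which transfers from the periodic to the cylinder setting unchanged, generic rigidity coincides with infinitesimal rigidity, so minimal rigidity is equivalent to the cylinder rigidity matrix attaining rank $2n-1$ generically. Specializing \propref{z2-teich} and \propref{z2-cent} to rank-$0$ and rank-$1$ subgroups of $\Z^2$ recovers the coefficients on the right-hand side of \eqref{cylinder-sparsity}, so the Maxwell heuristic of \secref{maxwell} predicts exactly the cylinder-Laman edge count.

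\textbf{The final step} is to conclude via the matroid theory underlying Theorem \theorefX{bigtheorem}. The direction-network proofs of \cite{MT10,MT11,MT12} show that, for a generic realization, the row-matroid of the periodic rigidity matrix coincides with the colored-Laman matroid of the colored quotient graph. Applied to $(G,\bgamma')$, this identifies the generic rank of the cylinder rigidity matrix with the rank of $(G,\bgamma')$ in the $\Z^2$ colored-Laman matroid; by \propref{cylinder-laman-sparsity} this rank is $2n-1$ exactly when $(G,\bgamma)$ is cylinder-Laman.

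\textbf{The main obstacle} is that Theorem \theorefX{bigtheorem} literally covers only colored graphs whose $\rho$-image is full-rank $\Z^2$, whereas the auxiliary $(G,\bgamma')$ always has $\rho$-image of rank at most one. What is really needed is the matroidal statement underlying the proof --- that for a generic $\Z^2$-framework the rank of the rigidity matrix equals the rank of the colored quotient graph in the colored-Laman matroid, with no hypothesis on the $\rho$-image being full. Extracting this from \cite{MT10,MT12} is routine but is the one place where the cylinder result is not an immediate corollary of the periodic theorem; an alternative route is to rerun the direction-network argument verbatim in the $\Z$-colored setting, using \eqref{cylinder-sparsity} in place of \eqref{z2-sparsity}.
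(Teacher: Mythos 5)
Your proposal takes essentially the same route as the paper: the paper's proof is exactly the observation that the cylinder rigidity matrix is \eqref{rigidity-matrix} with the $L_2$ column group discarded (equivalently, zeroed out, since all colors have the form $(\cdot,0)$), followed by an appeal to \propref{cylinder-laman-sparsity} and \theoref{bigtheorem}. The ``obstacle'' you flag --- that \theoref{bigtheorem} as stated characterizes bases of the colored-Laman matroid, while the auxiliary $\Z^2$-colored graph is only independent of rank $2n-1$, so one really needs the underlying rank formula for generic rigidity matrices from \cite{MT10,MT12} --- is a fair reading of a step the paper's two-line proof leaves implicit, and your proposed resolution is the intended one.
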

\begin{proof}
The rigidity matrix for a cylinder framework has the same form as \eqref{rigidity-matrix}, except with the
column group labeled $L_2$ discarded.  \propref{cylinder-laman-sparsity} and then \theoref{bigtheorem}
yield the desired statement.
\end{proof}

\subsection{Unit area fundamental domain}\seclab{unit-area}
Next, we consider the class of \emph{unit-area frameworks}, for which the allowed motions preserve the
area of the fundamental domain of the $\Z^2$-action on the plane induced by the $\Z^2$-representation
$\vec L$.

We define a \emph{unit-area-Laman graph} to be a $\Z^2$-colored graph $(G,\bgamma)$ with $n$
vertices, $m = 2n$ edges, and satisfying, for all subgraphs on $n'$ vertices, $m'$ edges,
and $c'_k$  connected components with $\rho$-image of rank $k$
\begin{eqnarray}\eqlab{unit-area-sparsity1}
m' & \le  2n' - 1 - 3c_0 - 2(c'_1 - 1) & \qquad\text{if $c'_2 = 0$} \\
\eqlab{unit-area-sparsity2}
m' & \le  2n' - 3c_0 - 2(c'_1 + c'_2 - 1) & \qquad\text{if $c'_2 > 0$}
\end{eqnarray}

\begin{theorem}\theolab{unit-area}
A generic unit-area framework is minimally rigid if and only if its associated colored graph
$(G,\bgamma)$ is unit-area-Laman.
\end{theorem}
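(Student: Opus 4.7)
The plan is to reduce \theoref{unit-area} to \theoref{bigtheorem} by analyzing how the unit-area constraint modifies the periodic rigidity matrix \eqref{rigidity-matrix}. Concretely, I would set up the \emph{unit-area rigidity matrix} as \eqref{rigidity-matrix} augmented with one additional row encoding the derivative of $\det \vec L = 1$; at the point $\vec L = \begin{pmatrix} a & b \\ c & d\end{pmatrix}$ this row is $(d,-c,-b,a)$ in the lattice columns $L_1, L_2$ and zero on the vertex columns. A unit-area framework has $2n+3$ non-trivial degrees of freedom (vertex orbits plus unit-area lattice) modulo the $3$-dimensional space of Euclidean isometries, giving an expected edge count $m=2n$ that matches the definition of unit-area-Laman. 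Generic infinitesimal unit-area rigidity is then equivalent to this augmented $(m+1)\times(2n+4)$ matrix having corank~$3$.

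The next step is to characterize the corank~$3$ condition as two independent conditions on $(G,\bgamma)$: \textbf{(i)} as a periodic framework, the $m$ length rows must have rank exactly $2n$, i.e., the periodic rigidity matrix has corank~$4$ and by \theoref{bigtheorem} the graph $(G,\bgamma)$ is colored-Laman sparse with $m = 2n$ edges; and \textbf{(ii)} the area row must be linearly independent of the length rows, equivalently the unique (up to Euclidean) non-trivial infinitesimal periodic flex must change the area. Condition \textbf{(ii)} is a natural converse to \propref{lattice-fix-implies-rk2-block}: it holds for a generic framework precisely when $(G,\bgamma)$ contains no ``rank-$2$ block'' (no subgraph with rank-$2$ $\rho$-image saturating the colored-Laman count).

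Translating the joint conditions \textbf{(i)}+\textbf{(ii)} into the explicit sparsity inequalities \eqref{unit-area-sparsity1}--\eqref{unit-area-sparsity2} then proceeds by a case analysis on subgraphs $G'$. For subgraphs with $c'_2 > 0$, condition \textbf{(ii)} tightens the colored-Laman bound by exactly one, producing \eqref{unit-area-sparsity2}; for subgraphs with $c'_2 = 0$, one must carefully track how the global area constraint interacts with the rank-$1$ components of $G'$ and the shared lattice (noting that such a subgraph's contribution to the rigidity matrix has nonzero support in the lattice columns even though its $\rho$-image may have low rank), yielding \eqref{unit-area-sparsity1}. Both directions of the iff then unwind from the rank characterization.

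The main obstacle is establishing condition \textbf{(ii)} in full generality: showing that the one-dimensional cokernel of the periodic rigidity matrix, for a generic realization of a colored-Laman sparse $(G,\bgamma)$ with $m = 2n$ and no rank-$2$ block, is in fact an \emph{area-changing} direction. This is more than just a counting statement---it requires ruling out the ``accidental'' possibility that the unique periodic flex happens to be area-preserving---and is the right-hand counterpart of \propref{lattice-fix-implies-rk2-block}. I would attack it either by a direct linear-algebra argument invoking genericity of $\vec p$ and $\vec L$ to rule out coincidental dependencies in the lattice columns, or, more robustly, by adapting the direction-network methods of~\cite{MT10,MT11,MT12} to produce an explicit area-changing motion whenever no rank-$2$ block is present.
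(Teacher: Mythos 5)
Your overall architecture matches the paper's: the same augmented rigidity matrix with the extra row $(d,-c,-b,a)$ in the lattice columns, the same reduction to \theoref{bigtheorem} via the combinatorial equivalence between unit-area-Laman sparsity and ``colored-Laman sparse with $2n$ edges and no subgraph with rank-$2$ $\rho$-image saturating \eqref{z2-sparsity}'' (this is \propref{unit-area-circuit}), and the same two supporting facts: a saturated rank-$2$ subgraph forces the lattice representation (\propref{unit-area-maxwell}, proved by a dimension count on the kernel), while in the absence of one the non-trivial flex must move the lattice (\propref{lattice-fix-implies-rk2-block}). You have also correctly isolated the genuine difficulty: a flex that moves $\vec L$ could still preserve $\det\vec L$, making the area row accidentally dependent.

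That isolated step is exactly what you have not proved, and neither of your proposed attacks is the paper's argument. The paper closes the gap with an affine-invariance trick (\propref{unit-area-laman}): realize the framework generically with $\vec L$ the identity; if the non-trivial flex $(\vec v,\vec M)$ preserves area there, then $\vec M$ lies in $\operatorname{sl}(2)$ and, since the flex does not act trivially on $\vec L$, its off-diagonal entries satisfy $-\mu\neq\nu$; now apply a generic linear map $\vec A$, which by \propref{infinitesimal-rigidity-fact} carries the framework to another generic realization and the flex to $(\vec A^*\vec v,\vec A^*\vec M)$; plugging $\vec A^*\vec M$ into \eqref{unit-area-row} yields \eqref{lhs-form}, which vanishes only when the columns of $\vec A$ have equal length and are orthogonal --- a non-generic condition. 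The point is that satisfying \eqref{unit-area-row} is \emph{not} affinely invariant, so a single bad realization is repaired by an affine change of coordinates. Your first suggestion (invoke genericity of $\vec p$ and $\vec L$ to rule out coincidental dependencies) does not obviously work: genericity controls the rank of the length rows but says nothing a priori about the direction of the one-dimensional space of non-trivial flexes within the lattice coordinates, which is a derived quantity rather than a free parameter. Your second suggestion (redo the direction-network analysis) could in principle succeed but is far heavier than needed. Without \propref{unit-area-laman} or a substitute, the Laman direction of your proof is incomplete.
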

\paragraph{Proof of \theoref{unit-area}}
The proof follows from three key propositions.  The first is a combinatorial equivalence.
\begin{prop}\proplab{unit-area-circuit}
A $\Z^2$-colored graph $(G,\bgamma)$ is unit-area-Laman if and only if it is colored-Laman-sparse and
has $n$ vertices, $2n$ edges, and no subgraph with rank $2$ $\rho$-image and \eqref{z2-sparsity} holding with equality.
\end{prop}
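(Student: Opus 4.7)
The plan is to prove the equivalence by a direct case-by-case comparison of the unit-area sparsity bound from \eqref{unit-area-sparsity1}--\eqref{unit-area-sparsity2} and the colored-Laman bound from \eqref{z2-sparsity}. Write $f(G')$ for the colored-Laman upper bound and $h(G')$ for the unit-area upper bound on $m'$ for a subgraph $G'$.

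First, I would carry out the algebraic comparison, splitting into cases by the rank $k = \rk(\rho(G'))$ and, when $k=2$, by whether $c'_2 = 0$ or $c'_2 \ge 1$. The expected outcome of this essentially routine simplification is that $h(G') = f(G')$ when $k \le 1$ and $h(G') < f(G')$ when $k = 2$, so that the unit-area constraint only tightens the colored-Laman bound on subgraphs whose $\rho$-image sees the full rank-$2$ lattice. Combined with the global count $m = 2n$ and the corresponding colored-Laman count $m = 2n + 1$ for a connected rank-$2$ graph, this explains why the unit-area bases are in one-to-one correspondence with colored-Laman sparse graphs having exactly one fewer edge than the colored-Laman maximum.

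With the case-by-case comparison in hand, both directions are short. For the forward direction, $m = 2n$ is part of the definition of unit-area-Laman; colored-Laman sparsity follows because $h \le f$ in every case; the strict inequality $h < f$ when $k = 2$ rules out any rank-$2$ subgraph saturating \eqref{z2-sparsity}. For the reverse direction, in the rank-$0$ and rank-$1$ cases the colored-Laman bound coincides with the unit-area bound, while for a rank-$2$ subgraph $G'$ the colored-Laman sparsity gives $m' \le f(G')$ and the no-saturation hypothesis improves this to $m' \le f(G') - 1$, matching $h(G')$.

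The main place I expect to need care is the rank-$2$, $c'_2 = 0$ case, where the piecewise definitions of $f$ and $h$ are written in the least parallel form: a subgraph can attain rank $2$ either through an individual rank-$2$ connected component or through two or more rank-$1$ components with independent $\rho$-images. Verifying that the algebraic comparison yields the same relationship $h = f - 1$ uniformly across both sub-cases (and not, for instance, a discrepancy that would allow a rank-$2$ subgraph built from rank-$1$ pieces to escape the no-saturation hypothesis) is the crux of the technical step; it reduces to careful bookkeeping about how the component counts $c'_0$, $c'_1$, $c'_2$ interact with the overall rank $k$ inside both sparsity formulas.
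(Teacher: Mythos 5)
Your overall strategy --- a direct case-by-case comparison of the two sparsity functions, aiming to show that the unit-area bound agrees with the colored-Laman bound $f$ when $\rk\rho(G')\le 1$ and drops to $f-1$ when the rank is $2$ --- is exactly the comparison the paper makes; the paper merely packages the rank-$2$ condition through the generic-self-loop construction of \propref{lattice-fix-implies-rk2-block} (a rank-$2$ subgraph meeting \eqref{z2-sparsity} with equality is precisely what becomes a violation after a generic loop is added). So the route is right. The difficulty is that the ``routine simplification'' you defer does not come out the way you predict if \eqref{unit-area-sparsity1}--\eqref{unit-area-sparsity2} are applied literally, and both directions of your argument lean on the predicted outcome.

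Concretely, write $h$ for the unit-area bound. For a subgraph with $c'_1=c'_2=0$ (rank $0$), the term $-2(c'_1-1)$ in \eqref{unit-area-sparsity1} contributes $+2$, giving $h=2n'+1-3c'_0$ while $f=2n'-3c'_0$; thus $h=f+1$ and ``$h\le f$ in every case'' fails (a trivially colored $K_4$ satisfies \eqref{unit-area-sparsity1} with $6\le 6$ but is not colored-Laman-sparse), so the forward implication does not follow from the literal counts. Dually, for a rank-$2$ subgraph with $c'_2=0$ (rank $2$ achieved by two or more rank-$1$ components with independent images) one gets $h=2n'+1-3c'_0-2c'_1=f-2$, not $f-1$; hence a colored-Laman-sparse graph with no rank-$2$ tight subgraph can still violate \eqref{unit-area-sparsity1} (two disjoint rank-$1$-tight components with independent images have $m'=2n'-2=f-1>h$), and the reverse implication fails as well. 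This is exactly the sub-case you flagged as the crux, but it does not resolve uniformly. The repair is to read the case split in \eqref{unit-area-sparsity1}--\eqref{unit-area-sparsity2} as a split on $k=\rk\Lambda(G')$ rather than on $c'_2$, i.e.\ $h=2n'+\min\{\max\{2k-1,0\},2\}-3c'_0-2c'_{\ge 1}$, which is what the unit-area degree-of-freedom heuristic gives; with that reading your dichotomy ($h=f$ for $k\le 1$, $h=f-1$ for $k=2$) is correct and the rest of your argument closes. As proposed, the postponed computation would not.
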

\begin{proof}[Proof of \propref{unit-area-circuit}]
Comparing \eqref{z2-sparsity} with \eqref{unit-area-sparsity1}--\eqref{unit-area-sparsity2},
we see that unit-area-Laman graphs are exactly those which, after following the construction used to prove
\propref{lattice-fix-implies-rk2-block}, become colored-Laman.
\end{proof}

\paragraph{The Maxwell direction}
For the geometric part of the proof, we first derive the rigidity matrix.
If $\vec L = \ttmat{a}{b}{c}{d}$,
and we coordinatize infinitesimal motions as $(\vec v,\vec M)$ with $\vec M = \ttmat{p}{q}{r}{s}$,
then this has the form of \eqref{rigidity-matrix} plus one additional row corresponding to the equation
\begin{equation}\eqlab{unit-area-row}
\iprod{(d,-c,-b,a)}{(p,q,r,s)} = 0
\end{equation}
Violations of unit-area-Laman-sparsity come in two types, according to the rank  $k$ of the $\rho$-image.
For $k=0,1$, these are all violations of colored-Laman sparsity, implying, by \theoref{bigtheorem},
a generic dependency in the unit-area rigidity matrix that does not involve the row \eqref{unit-area-row}.
For $k=2$, \propref{unit-area-circuit} implies a new type of violation: a subgraph $(G',\bgamma)$
with $n'$ vertices, $\rho$-image of rank $2$, and $f(G')$ edges.  If such a subgraph forces a
generic periodic framework to fix the lattice representation $\vec L$, then the equation \eqref{unit-area-row}
is dependent on the equations corresponding to edge lengths.
The Maxwell direction then follows from the converse of \propref{lattice-fix-implies-rk2-block}.
\begin{prop}\proplab{unit-area-maxwell}
Let $(G,\bgamma)$ be a colored-Laman sparse graph with $\rho$-image of rank $2$ and
\eqref{z2-sparsity} met with equality.  Then an associated generic framework has only motions
that act trivially on the  $\Z^2$-representation $\vec L$.
\end{prop}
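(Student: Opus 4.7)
The plan is to match the dimension of the kernel of the periodic rigidity matrix \eqref{rigidity-matrix} against the dimension of the subspace of infinitesimal motions that act trivially on $\vec L$, i.e., those with $\vec M \in \operatorname{span}(J \vec L)$ for $J = \ttmat{0}{-1}{1}{0}$.

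For the kernel dimension, I invoke the matroidal content of \theoref{bigtheorem}: a colored-Laman sparse graph has edges that form an independent set in the rigidity matroid, so the $m$ edge rows of the rigidity matrix are linearly independent at a generic framework. This gives $\dim \ker R = 2n + 4 - m$, and substituting the hypothesis $m = f(G) = 2n + 3 - 3c_0 - 2c_{\ge 1}$ (the rank-$2$ case of \eqref{z2-f-fun}) yields $\dim \ker R = 1 + 3c_0 + 2c_{\ge 1}$.

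For the trivial motions, I exhibit three independent families: (i) for each rank-$0$ connected component, a three-dimensional space of per-component Euclidean motions (two translations and one rotation, all with $\vec M = 0$, since such a component carries no colors involving $\vec L$); (ii) for each rank-$\ge 1$ connected component, a two-dimensional space of per-component translations (again $\vec M = 0$); and (iii) a single global rotation taking $\vec L \mapsto \vec L + \theta J \vec L$ and rotating every component about a common center. Families (i) and (ii) have $\vec M = 0$ while (iii) has $\vec M \ne 0$ generically, and they are independent component-by-component, so together they span $3c_0 + 2c_{\ge 1} + 1$ dimensions — exactly matching $\dim \ker R$. The inclusion of trivial motions into $\ker R$ is therefore an equality, so every infinitesimal motion has $\vec M \in \operatorname{span}(J \vec L)$; \propref{infinitesimal-rigidity-fact} then promotes this to the analogous statement for continuous motions at a generic realization.

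The main delicate point is the trivial-motion count, in particular the claim that rank-$\ge 1$ components admit only translational per-component trivial motions, with rotation appearing as a single global degree of freedom. The reason is that rotating such a component without simultaneously rotating $\vec L$ disturbs the term $\vec L \gamma_{ij}$ in each edge equation $|\vec p_j + \vec L \gamma_{ij} - \vec p_i|^2 = \text{const}$, so the rotation must be matched by a rotation of $\vec L$; and because $\vec L$ is a single shared object, all rank-$\ge 1$ components are forced to rotate in lockstep, contributing just one rotational degree of freedom for the whole framework.
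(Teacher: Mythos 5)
Your proof is correct and rests on the same core idea as the paper's: use \theoref{bigtheorem} to conclude that the edge rows of the rigidity matrix are independent at a generic realization, compute $\dim\ker = 2n+4-m$, and then exhibit that many independent infinitesimal motions acting trivially on $\vec L$, forcing equality. The one organizational difference is that the paper first passes to a graph that is \emph{minimal} with respect to the hypotheses and observes that minimality forces every connected component to have $\rho$-image of rank at least one (if a rank-$0$ component $G_i$ existed, colored-Laman sparsity together with $m=f(G)$ would force both $m_i=2n_i-3$ and $m-m_i=f(G\setminus G_i)$, so $G\setminus G_i$ would be a smaller witness); this leaves only the clean count $2c+1$ coming from per-component translations plus one global rotation. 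You instead count in the general case, which obliges you to justify the extra $3c_0$ dimensions contributed by rank-$0$ components. That step is right in substance, but your justification (``such a component carries no colors involving $\vec L$'') is slightly off as stated: an edge of a rank-$0$ component can carry a nonzero color $\gamma_{ij}$; only the sums of colors around cycles vanish. The repair is to note that triviality of the $\rho$-image means the coloring restricted to that component is a coboundary, $\gamma_{ij}=\mu_j-\mu_i$, so the per-component rotation $\vec v_i = J(\vec p_i + \vec L\mu_i - \vec q)$ with $\vec M=0$ does lie in the kernel. With that observation the two arguments are equivalent: the paper's minimality reduction buys a shorter trivial-motion count, while your version avoids the (easy) minimality argument at the cost of this extra gauge step.
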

\begin{proof}[Proof of \propref{unit-area-maxwell}]
Let $(G,\bgamma)$ have $n$ vertices, and $c$ connected components.
It is sufficient to consider $(G,\bgamma)$ that is minimal with respect to the hypotheses of the
proposition, which forces every connected component to have $\rho$-image with rank
at least one.  In this case, there are $m = 2n + 3 - 2c$ edges.  By \theoref{bigtheorem},
the kernel of the rigidity matrix has dimension $2n + 4 - m = 2c + 1$.
Since the connected components can translate independently, and
the whole framework can rotate, there are at least $2c + 1$ dimensions of infinitesimal
motions acting trivially on the lattice.
\end{proof}

\paragraph{The Laman direction}
Now let $(G,\bgamma)$ be a unit-area-Laman graph.
\theoref{bigtheorem} implies that any generic periodic framework on $(G,\bgamma)$
has a $4$-dimensional space of infinitesimal motions, and that any non-trivial
infinitesimal motion is a linear combination of $3$ trivial ones and some other
infinitesimal motion $(\vec v,\vec M)$.  Since the trivial infinitesimal motions
act trivially on the lattice representation $\vec L$, if $(\vec v,\vec M)$
does as well, then a generic framework on $(G,\bgamma)$ fixes the lattice
representation.  By Propositions  \proprefX{unit-area-circuit} and
\proprefX{lattice-fix-implies-rk2-block} this is impossible, implying that
$(\vec v,\vec M)$ does not act trivially on the lattice representation.
However, it might preserve the area of the fundamental domain,
which would make \eqref{unit-area-row} part of a dependency in the unit-area rigidity matrix.
The Laman direction will then follow once we can exhibit a generic periodic
framework on $(G,\bgamma)$ for which $(\vec v,\vec M)$ does not preserve the
area of the fundamental domain.

To do this, we recall, from \propref{infinitesimal-rigidity-fact}, that a generic
linear transformation
\begin{equation}
\vec A = \ttmat{a}{b}{c}{d}
\end{equation}
preserves infinitesimal rigidity and
sends the non-trivial infinitesimal motion $(\vec v,\vec M)$ to another
non-trivial infinitesimal motion $(\vec v',\vec M')$, which is given by
\begin{eqnarray*}
\vec v_i' = \vec A^*\cdot \vec v_i & \qquad\text{for all $i\in V(G)$} \\
\vec M' = \vec A^*\cdot \vec M\eqlab{m-prime}
\end{eqnarray*}
where
\begin{equation}
\vec A^* = \det(\vec A)^{-1}\ttmat{d}{-c}{-b}{a}
\end{equation}
is the transpose of $\vec A^{-1}$.  The main step is this next proposition which says
that satisfying \eqref{unit-area-row} is \emph{not} affinely invariant.
\begin{prop}\proplab{unit-area-laman}
Let $(G,\bgamma)$ be a unit-area-Laman graph, and let $\tilde{G}(\vec p,\vec L)$
be a generic realization with $\vec L$ being the identity matrix, let $\vec A$
be a generic linear transformation, and let the infinitesimal motions
$(\vec v,\vec M)$ and $(\vec v',\vec M')$ be defined as above.  If
$(\vec v,\vec M)$ preserves the area of the fundamental domain, then
$(\vec v',\vec M')$ does not.
\end{prop}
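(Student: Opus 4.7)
The plan is to compute explicitly how the area-preservation condition transforms under the change of coordinates $\vec A$, and then observe that for generic $\vec A$ the transformed condition fails.

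The area of the fundamental domain is $|\det \vec L|$, so an infinitesimal motion $(\vec v, \vec M)$ preserves area if and only if $\operatorname{tr}(\operatorname{adj}(\vec L)\, \vec M) = 0$; this is just equation \eqref{unit-area-row}. For the original framework with $\vec L = I$ this gives the hypothesis $\operatorname{tr}(\vec M) = 0$. After applying $\vec A$, the transformed framework has lattice $\vec L' = \vec A \vec L = \vec A$ and infinitesimal motion $\vec M' = \vec A^* \vec M = (\vec A^{-1})^T \vec M$, so the new area-preservation condition is $\operatorname{tr}(\operatorname{adj}(\vec A)\, \vec A^* \vec M) = 0$. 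Using $\operatorname{adj}(\vec A) = \det(\vec A)\, \vec A^{-1}$, a short computation gives $\operatorname{adj}(\vec A)\, \vec A^* = \det(\vec A)\,(\vec A^T \vec A)^{-1}$, so after dividing out $\det \vec A \ne 0$ the transformed condition becomes
\[
\operatorname{tr}\!\bigl((\vec A^T \vec A)^{-1} \vec M\bigr) = 0.
\]

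Next I would set $\vec B := (\vec A^T \vec A)^{-1}$ and observe that, as $\vec A$ ranges over $GL(2,\R)$, $\vec B$ ranges over every positive-definite symmetric $2\times 2$ matrix (via $\vec A = \vec B^{-1/2}$). Since $\vec B$ is symmetric, $\operatorname{tr}(\vec B\, \vec M) = \operatorname{tr}(\vec B\, \vec M_{\mathrm{sym}})$, where $\vec M_{\mathrm{sym}}$ denotes the symmetric part of $\vec M$. The Laman-direction discussion preceding the proposition established that $(\vec v, \vec M)$ is non-trivial and does not act trivially on the lattice representation, which means $\vec M$ is not a scalar multiple of the infinitesimal rotation matrix $\bigl(\begin{smallmatrix} 0 & -1 \\ 1 & 0\end{smallmatrix}\bigr)$; equivalently, $\vec M_{\mathrm{sym}} \ne 0$. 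Combined with $\operatorname{tr} \vec M = 0$, this makes $\vec M_{\mathrm{sym}}$ a nonzero trace-zero symmetric matrix. Then $\operatorname{tr}(\vec B\, \vec M_{\mathrm{sym}}) = 0$ defines a proper hyperplane in the three-dimensional space of symmetric matrices, and the open cone of positive-definite symmetric matrices is not contained in any such hyperplane. Hence the set of $\vec A$ satisfying $\operatorname{tr}((\vec A^T \vec A)^{-1} \vec M) = 0$ is a proper Zariski-closed subset of $GL(2,\R)$; a generic $\vec A$ avoids it, so $(\vec v', \vec M')$ does not preserve the area of the new fundamental domain.

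The main subtle point, and in my view the only non-routine step, is the input that $\vec M$ has a nonzero symmetric part. Without it, $\vec M$ could be a trivial rotation of the lattice and $\operatorname{tr}(\vec B\, \vec M)$ would vanish identically for every symmetric $\vec B$, collapsing the argument. This non-triviality is already furnished by Propositions~\proprefX{unit-area-circuit} and~\proprefX{lattice-fix-implies-rk2-block} in the discussion immediately preceding the proposition; the remainder of the proof is an explicit matrix computation plus a dimension-count on the cone of positive-definite matrices.
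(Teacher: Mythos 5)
Your proof is correct and takes essentially the same approach as the paper: both apply the generic linear map $\vec A$, compute how the area condition \eqref{unit-area-row} transforms, and conclude by generic non-vanishing — your $\operatorname{tr}\bigl((\vec A^T\vec A)^{-1}\vec M\bigr)$ is exactly the paper's expression \eqref{lhs-form} up to a factor of $\det\vec A$. Your invariant packaging (the nonvanishing symmetric part $\vec M_{\mathrm{sym}}$ paired against the cone of positive-definite matrices) is a slightly cleaner, and arguably more careful, justification of the final genericity step than the paper's coordinate computation, but it is the same argument.
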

\begin{proof}[Proof of \propref{unit-area-laman}]
Because $\vec L$ is the identity, $\vec M$ has the form
\begin{equation}\eqlab{m-form}
\vec M = \ttmat{\lambda}{\mu}{\nu}{-\lambda}
\end{equation}
either by direction computation, or by observing that it is an element of the Lie algebra $\operatorname{sl}(2)$,
as discussed above, we know that $-\mu\neq \nu$, since $(\vec v,\vec M)$ does not act trivially on $\vec L$.
In particular, $\mu$ and $\nu$ are not both zero.
Plugging in to \eqref{m-prime} we get
\begin{equation}
\vec M' =
\det(\vec A)^{-1}\ttmat{d\lambda - c\nu}{c\lambda+d\mu}{a\nu - b\lambda}{-a\lambda - b\mu}
\end{equation}
Plugging entries of $\vec M'$ in to the l.h.s. of \eqref{unit-area-row} to obtain:
\begin{equation}\eqlab{lhs-form}
\det(\vec A)^{-1}\left(\lambda(d^2 + b^2 - a^2-c^2) - (\mu+\nu)(ab+cd) \right)
\end{equation}
which is generically non-zero in the entries of $\vec A$: the conditions for \eqref{lhs-form}
are that its columns are the same length and orthogonal to each other.
\end{proof}
We now observe that, by \propref{infinitesimal-rigidity-fact},
there is a generic realization  $\tilde{G}(\vec p,\vec L)$
of a framework with unit-area-Laman colored quotient $(G,\bgamma)$ in which $\vec L$ is the identity.  If
the non-trivial infinitesimal motion $(\vec v,\vec M)$ does not satisfy \eqref{unit-area-row}, we are done.
Otherwise, the hypothesis of \propref{unit-area-laman} are met, and, thus, after applying a generic
linear transformation, the proof is complete.
\eop

\subsection{Fixed-lattice frameworks}
Another restricted class of periodic frameworks,
are \emph{fixed-lattice frameworks}. These are periodic frameworks, with the restriction
that the allowed motions act trivially on the lattice representation.  This model was
introduced by Whiteley \cite{W88} in the first investigation of generic rigidity with
forced symmetry.  More recenty, Ross discovered the
following%
\footnote{The sparsity counts we describe here are slightly different from what is stated in
\cite[Theorem 4.2.1]{R11}, but they are equivalent by an argument similar to that
in the proof of \propref{z2-sparsity-equiv}.  This presentation highlights the connection to
colored-Laman graphs.}  %
complete characterization of minimal rigidity for fixed-lattice frameworks.
\begin{theorem}[\fixedlatticetheorem][{\cite{R09}\cite[Theorem 4.2.1]{R11}}]\theolab{fixed-lattice}
Let $\tilde{G}(\vec p)$ be a generic fixed-lattice framework.  Then $\tilde{G}(\vec p)$ is minimally
rigid if and only if the associated colored graph $(G,\bgamma)$ has $n$ vertices, $m = 2n - 2$
edges and, for all subgraphs $G'$ of $G$ with $n'$ vertices, $m'$ edges, $c'_0$ connected components
with trivial $\rho$-image, and $c'_{\ge 1}$ connected components with non-trivial $\rho$-image:
\begin{equation}
m' \le 2n' - 3c'_0 -2c'_{\ge 1}
\end{equation}
\end{theorem}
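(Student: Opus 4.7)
The plan is to follow the strategy of \theoref{unit-area}, using \theoref{bigtheorem} as the main geometric input. The fixed-lattice rigidity matrix $R_v$ is the periodic rigidity matrix of \eqref{rigidity-matrix} with the two lattice columns $L_1, L_2$ deleted, since the lattice representation $\vec L$ is now a constant rather than a variable. A generic fixed-lattice framework is minimally rigid if and only if $R_v$ has corank exactly $2$, corresponding to simultaneous translations of the framework (the only trivial motions once $\vec L$ is frozen).

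For the Maxwell direction I would count trivial motions on each connected component of an arbitrary subgraph $G'$. A rank-$0$ connected component admits three trivial motions---two translations and one rotation, because rotating a finite rank-$0$ component around the centroid of each of its $\Z^2$-translates preserves both edge lengths and the fixed $\vec L$. A rank-$\ge 1$ component admits only the two translations, since any nontrivial rotation would alter $\vec L$. Therefore $\dim \ker R_v|_{G'} \ge 3c'_0 + 2c'_{\ge 1}$, giving $m' \le 2n' - 3c'_0 - 2c'_{\ge 1}$.

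For the Laman direction, note that fixed-lattice-Laman sparsity strictly refines colored-Laman sparsity (the two bounds differ by $\max\{2k-1,0\}$), so any fixed-lattice-Laman $(G,\bgamma)$ with $m = 2n-2$ is automatically colored-Laman-sparse and contains no rank-$2$ colored-Laman-tight subgraph (such a subgraph would overshoot the fixed-lattice bound by exactly $3$). \theoref{bigtheorem} then guarantees that the periodic rigidity matrix $R_p$ has full row rank $m = 2n-2$ at a generic realization, so $\dim \ker R_p = (2n+4) - (2n-2) = 6$. Writing $\pi\colon \ker R_p \to \R^4$ for the projection onto the lattice variables, we have $\ker R_v = \ker \pi$ as subspaces of $\ker R_p$, so $\dim \ker R_v = 6 - \dim \pi(\ker R_p)$; the goal thus reduces to showing that $\pi$ is surjective at a generic realization, yielding $\dim \ker R_v = 2$ as required.

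The main obstacle is exactly this surjectivity step. \propref{lattice-fix-implies-rk2-block} only rules out the framework fixing the entire lattice, whereas we need it to fix no direction at all. To close the gap I would argue contrapositively: if $\pi$ missed some $1$-dimensional direction of $\R^4$ at a generic realization, that direction would necessarily lie in the $3$-dimensional non-rotation subspace (the rotation direction is always in $\pi(\ker R_p)$ via the trivial rotation motion), and appending a self-loop whose color is tuned to constrain it, as in the proof of \propref{lattice-fix-implies-rk2-block}, would create a colored-Laman dependency, producing a rank-$2$ colored-Laman-tight subgraph of $(G,\bgamma)$ and contradicting fixed-lattice-Laman sparsity. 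An alternative combinatorial route, suggested by the footnote accompanying the theorem, is to verify via an argument like \propref{z2-sparsity-equiv} that the counts stated here coincide with Ross's original counts, and to cite her rigidity result as a black box.
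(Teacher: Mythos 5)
Your overall strategy --- reduce to \theoref{bigtheorem} by comparing the kernel of the periodic rigidity matrix with the kernel of its lattice-column-deleted submatrix --- is essentially the paper's route (the paper packages it as \theoref{fixed-lattice-2} plus \propref{ross-by-adding}: simulate the fixed lattice by adding self-loops and then prove a purely combinatorial equivalence between Ross graphs and colored-Laman graphs). Your Maxwell direction is fine. The genuine gap is in the surjectivity step of your Laman direction. A self-loop colored $\eta$ contributes a row that vanishes on the vertex columns and restricts, on the lattice variables $\vec M$, to the functional $\vec M\mapsto \iprod{\vec L\eta}{\vec M\eta}$, i.e.\ (for $\vec L$ the identity) the rank-one symmetric form $\eta\eta^{T}$. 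If $\pi(\ker R_p)$ is a proper subspace of $\R^4$ containing the rotation line, its annihilator is a line of symmetric forms that in general is \emph{not} spanned by any $\eta\eta^{T}$ with $\eta\in\Z^2$; so there need not exist a single self-loop color ``tuned to constrain'' the missed direction, and your contrapositive does not close.

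The repair is exactly the paper's shortcut: add \emph{three} self-loops colored $(1,0)$, $(0,1)$, $(1,1)$, whose associated forms span the full $3$-dimensional space of symmetric forms on the lattice variables. The combinatorial statement you need --- that a Ross graph plus these three loops is colored-Laman (\propref{ross-by-adding}), which uses your correct observations that Ross sparsity implies colored-Laman sparsity and forbids rank-$2$ tight subgraphs, but also requires checking rank-$0$ and rank-$1$ subgraphs containing one or two of the loops --- then lets \theoref{bigtheorem} certify that all $2n+1$ rows are independent at a generic realization. Hence the three loop rows cut $\ker R_p$ from dimension $6$ down to $3$, forcing $\vec M$ to be an infinitesimal rotation on that $3$-dimensional space; killing the rotation leaves $\dim\ker R_v = 2$, which is what you wanted. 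Your closing suggestion of citing Ross's theorem after a count-equivalence argument in the style of \propref{z2-sparsity-equiv} is legitimate but is a different (citation-based) route, not the proof via \theoref{bigtheorem} that the paper presents.
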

We define the family of graphs appearing in \theoref{fixed-lattice} to be \emph{Ross graphs}.
In \cite{MT10}, we gave an alternate proof based on \theoref{bigtheorem}.  The
two steps are similar to the ones used to prove \theoref{unit-area}, except we
can take a ``shortcut'' in the argument by simulating fixing the lattice by adding
self-loops to the colored graph.  The geometric step is:
\begin{theorem}[\fixlatticemt][{\cite[Section 19.1]{MT10}}]\theolab{fixed-lattice-2}
Let $\tilde{G}(\vec p)$ be a generic fixed-lattice framework.  Then $\tilde{G}(\vec p)$ is
minimally rigid if and only if the associated colored graph $(G,\bgamma)$ plus three
self-loops colored $(1,0)$, $(0,1)$, $(1,1)$ added to any vertex is colored-Laman.
\end{theorem}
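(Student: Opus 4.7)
The plan is to reduce fixed-lattice rigidity to periodic rigidity of an augmented graph and then invoke \theoref{bigtheorem}. The key point is that the three self-loops, through the structure of the rigidity matrix \eqref{rigidity-matrix}, act as constraints that fix the lattice representation up to infinitesimal rotation.

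First I would analyze the rows of the periodic rigidity matrix corresponding to self-loops. A self-loop at vertex $i$ with color $\gamma$ has $\eta_{ii} = \vec L\gamma$, but its point-column contribution is $-\eta_{ii} + \eta_{ii} = 0$. Thus each self-loop row lives entirely in the $L_1, L_2$ columns, and produces a single linear constraint on $\vec M = \ttmat{p}{q}{r}{s}$ of the form $\iprod{\vec L\gamma}{\vec M\gamma} = 0$. A direct computation shows that for generic $\vec L = \ttmat{a}{b}{c}{d}$, the three colors $(1,0), (0,1), (1,1)$ yield three linearly independent constraints whose kernel is spanned by the infinitesimal rotation $\vec M = R\vec L$ with $R = \ttmat{0}{-1}{1}{0}$.

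Next I would exhibit a kernel decomposition for the augmented rigidity matrix. An infinitesimal motion $(\vec v,\vec M)$ lies in this kernel iff $\vec M = tR\vec L$ for some $t\in \R$, and, substituting and using that $R$ is antisymmetric (so $\iprod{\eta_{ij}}{R\eta_{ij}} = 0$), the original edge equations become $\iprod{\eta_{ij}}{\vec v'_j - \vec v'_i} = 0$ where $\vec v' = \vec v - tR\vec p$ — i.e., the fixed-lattice infinitesimal rigidity equations for $\tilde{G}(\vec p)$. Hence $\dim(\text{augmented kernel}) = 1 + \dim(\text{fixed-lattice kernel})$. Since the augmented framework has a 3-dimensional space of trivial periodic motions and the fixed-lattice framework has a 2-dimensional space of trivial translational motions, the augmented framework is periodic-infinitesimally rigid iff the fixed-lattice framework is infinitesimally rigid.

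Finally, applying \theoref{bigtheorem} to the augmented colored graph and \propref{infinitesimal-rigidity-fact} (which transfers infinitesimal statements to generic rigidity in both settings) yields the desired equivalence; the edge count $2n - 2 + 3 = 2n + 1 = 2n + \teichgamma(\Z^2) - \cent(\Z^2)$ is consistent. The main obstacle I anticipate is Step 3: verifying that no additional dependencies creep in when attaching all three self-loops to a single vertex, and checking that a generic $\vec L$ (together with generic point coordinates in the fixed-lattice sense) yields a realization that is also generic for the colored graph with self-loops, so that \theoref{bigtheorem} is legitimately applicable. This requires a small but careful transversality argument, essentially observing that the self-loop constraints cut out a subvariety of the representation space that contains an open set of $\vec L$.
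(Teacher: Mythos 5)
Your proposal is correct and follows the same route the paper indicates for this result (and that is carried out in \cite[Section 19.1]{MT10}): the three self-loop rows live entirely in the lattice columns and cut the space of infinitesimal lattice motions $\vec M$ down to the span of the infinitesimal rotation $R\vec L$, so that the kernel of the augmented periodic rigidity matrix is exactly the fixed-lattice kernel plus one rotational direction, and \theoref{bigtheorem} then transfers the statement. Your closing genericity worry is easily dispatched, since the kernel correspondence you set up holds for \emph{every} realization with $\vec L$ nonsingular, not just generic ones.
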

\theoref{fixed-lattice} then follows from the following combinatorial statement
that generalizes an idea of Lovász-Yemini \cite{LY82} and Recski \cite{R84}
(cf. \cite{HLST07,H02a}).
\begin{prop}[\rossbyadding][{\cite[Lemma 19.1]{MT10}}]\proplab{ross-by-adding}
A colored graph $(G,\bgamma)$ is a Ross graph if and only if adding three self-loops
colored $(1,0)$, $(0,1)$, $(1,1)$ to any vertex results in a colored-Laman graph.
\end{prop}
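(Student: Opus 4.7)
The proposition establishes a combinatorial equivalence between Ross sparsity (budget $2n'-3c'_0 - 2c'_{\ge 1}$ with global edge count $m = 2n-2$) and colored-Laman sparsity after adding three self-loops with colors $(1,0)$, $(0,1)$, $(1,1)$ at a single vertex $v$, in the spirit of the Lov\'asz--Yemini and Recski construction mentioned in the paper. The plan is to prove both directions by direct edge counting. The crucial combinatorial input is that any two of the three loop colors are linearly independent in $\Z^2$, so as soon as at least two of the loops appear in a subgraph, the component of $v$ in that subgraph has $\rho$-image of rank $2$.

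For the forward direction, I would first check the global edge count. A Ross graph must be connected, since any trivial component would consume at least $3$ units of the Ross budget and any second non-trivial component another $2$, forcing $m < 2n-2$. Hence $G^+$ has one connected component with rank-$2$ $\rho$-image and $m+3 = 2n + 3 - 2 = 2n+1$ edges, matching the colored-Laman count. For the subgraph sparsity check, take any $H^+ \subseteq G^+$ and write $H^+ = H \cup L$ with $H = H^+ \cap G$ and $L$ a set of $\ell \in \{0,1,2,3\}$ loops at $v$. A short case analysis on $\ell$, combining Ross sparsity on $H$ with the rank increases (and the possible promotion of $v$'s component out of the trivial category) contributed by $L$, yields the colored-Laman bound on $H^+$.

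For the backward direction, I would first check the global edge count. Since $G^+$ is a basis of the colored-Laman matroid, no edge can be added without violating sparsity; this forces $c^+_0 = 0$ (else adding a loop to a trivial component would be permitted) and $c^+_{\ge 1} = 1$ (else a cross-edge between two non-trivial components would be permitted), giving $m^+ = 2n+3-2 = 2n+1$ and $m = 2n-2$ with $G$ connected. For Ross sparsity of $H \subseteq G$, I would pick $v$ strategically (using the ``for any vertex'' freedom) and apply the colored-Laman bound to an appropriate subgraph $H^+ \subseteq G^+_v$. If $k_H = 0$, take $H^+ = H$ with no loops: the colored-Laman bound coincides with the Ross bound since $c^H_{\ge 1} = 0$. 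If $k_H \ge 1$, choose $v$ either outside $V(H)$ (when possible) or inside a non-trivial component of $H$, and take $H^+ = H \cup \{\text{3 loops}\}$ (adjoining $v$ as a new vertex if $v \notin V(H)$). In either variant the colored-Laman bound
\[
m_H + 3 \le 2 n_{H^+} + 3 - 3 c^{H^+}_0 - 2 c^{H^+}_{\ge 1}
\]
simplifies to $m_H \le 2 n_H - 3 c^H_0 - 2 c^H_{\ge 1}$, which is the Ross bound on $H$.

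The main obstacle is the careful component- and rank-bookkeeping, particularly in the backward direction: without the flexibility to choose $v$, one runs into an awkward case where $v$ sits in a trivial component of $H$ and a direct application of the colored-Laman bound leaves a one-unit slack. The ``for any vertex'' hypothesis supplies exactly the freedom needed to sidestep this case and land precisely on the Ross bound.
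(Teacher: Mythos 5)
The paper states this proposition only with a citation to \cite[Lemma 19.1]{MT10} and gives no proof of its own, so there is no in-text argument to compare against; judged on its own terms, your proof is correct. The global counts in both directions check out (the Ross count applied to all of $G$ forces $c_0=0$ and $c_{\ge 1}=1$, hence a connected graph with $2n-2$ edges; the colored-Laman count applied to all of $G^+$ forces the same connectivity and $m^+=2n+1$), and your key observation --- that any two of $(1,0)$, $(0,1)$, $(1,1)$ are linearly independent, so two or more loops already promote the component of $v$ to rank $2$ and cap the Teichm\"uller bonus at $\max\{2\cdot 2-1,0\}=3$, exactly absorbing the three added edges --- is the right engine for the forward case analysis; the subcases $\ell=0,1,2,3$, with $v$'s component trivial or not, all close as you claim. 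The one point worth tightening is the backward direction's reliance on choosing $v$ adaptively per subgraph $H$. This is legitimate under the ``for every vertex'' reading of the hypothesis, but it only proves the implication from the all-vertices version, and it is avoidable: the Ross count $m'\le 2n'-3c_0'-2c_{\ge 1}'$ is additive over connected components, so it suffices to verify it for \emph{connected} subgraphs $H$. For connected $H$ with trivial $\rho$-image, \eqref{z2-sparsity} applied to $H$ alone gives $m_H\le 2n_H-3$; for connected $H$ with non-trivial $\rho$-image, apply \eqref{z2-sparsity} to $H$ together with the three loops at the \emph{fixed} vertex $v$, adjoining $v$ as an extra rank-$2$ component if $v\notin V(H)$ --- in either case the bound simplifies to $m_H\le 2n_H-2$ with no one-unit slack. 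This eliminates the awkward case you flag and yields the stronger conclusion that a single vertex already certifies the Ross property.
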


\subsection{Further connections}
Theorems \theorefX{cylinder} and \theorefX{unit-area} suggest a more general methodology
for obtaining Maxwell-Laman-type theorems for restrictions of periodic frameworks:
\begin{itemize}
\item Add an equation that restricts the allowed lattice representations $\vec L$.
\item Identify which generic periodic frameworks are the maximal ones that do
not imply the new restriction.
\end{itemize}
Our proof of \theoref{fixed-lattice} works this way as well:
adding self-loops adds three equations constraining the lattice representation.
Another perspective is that we are enlarging the class of trivial infinitesimal
motions by forcing one or more vectors into the kernel of the periodic rigidity matrix.
The most general form of this operation is known as the ``Elementary Quotient'' or
``Dilworth Truncation'', and it preserves \emph{representability}
of $(k,\ell)$-sparsity matroids \cite{ST11}, but obtaining \emph{rigidity} results (e.g., \cite{LY82})
requires geometric analysis specific to each case.  This section gives
a family of examples where we find new \emph{rigidity} matroids from each
other using a specialized version of Dilworth Truncation.

Ross \cite[Section 5]{R11} has studied some restrictions of periodic frameworks
as generalizations of the fixed-lattice model.  In this section we close the
circle of ideas, showing how to study them as specializations of the periodic one.

\subsection{One more variant}
We end this section with one more variation of the periodic model.  A \emph{fixed-angle framework} is defined to
be a periodic framework where the allowed motions preserve the angle between the
sides of the fundamental domain.
\begin{theorem}
A generic fixed angle framework is minimally rigid if and only if its associated
colored graph is unit-area-Laman.
\end{theorem}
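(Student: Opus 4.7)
The plan is to mirror the proof of \theoref{unit-area}, replacing the area-preservation row \eqref{unit-area-row} of the rigidity matrix with a row that linearizes the angle-preservation constraint. A direct computation at $\vec L = I$ shows that an infinitesimal change $\vec M = \ttmat{p}{q}{r}{s}$ preserves the angle between the sides of the fundamental domain if and only if $q + r = 0$, i.e., $\vec M$ is symmetric. For a general $\vec L$, the linearization is a more complicated (but still explicit) linear equation in the entries of $\vec M$ whose coefficients are polynomial in the entries of $\vec L$ (obtained by differentiating $(v_1\cdot v_2)^2/(|v_1|^2|v_2|^2)$, with $v_1, v_2$ the columns of $\vec L$).

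The Maxwell direction proceeds verbatim from \theoref{unit-area}. Violations of colored-Laman-sparsity arising from subgraphs of $\rho$-rank at most one already yield dependencies in the periodic rigidity matrix \eqref{rigidity-matrix} by \theoref{bigtheorem}, and these inherit to the fixed-angle matrix. The remaining violations of unit-area-Laman-sparsity are, by \propref{unit-area-circuit}, precisely the subgraphs with rank $2$ $\rho$-image and $f(G') = m'$; by \propref{unit-area-maxwell} such subgraphs force the lattice representation to be fixed in a generic realization, and a fixed lattice trivially fixes the angle between the sides of the fundamental domain, so the new row becomes dependent on the rest.

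The Laman direction requires the fixed-angle analog of \propref{unit-area-laman}: given a unit-area-Laman graph $(G,\bgamma)$, exhibit a generic realization for which the unique (modulo trivial motions) non-trivial infinitesimal motion $(\vec v, \vec M)$ does not preserve the angle. Following the unit-area strategy, start with a generic realization at $\vec L = I$. If $\vec M$ is not symmetric we are done; otherwise we apply a generic linear transformation $\vec A$, obtaining the transformed motion $(\vec v', \vec M')$ with $\vec M' = \vec A^* \vec M$ as in \eqref{m-prime}. Substituting this $\vec M'$ (with $\vec M$ a nonzero symmetric matrix) into the linearized angle-preservation equation at the new lattice $\vec L' = \vec A$ produces a polynomial in the entries of $\vec A$ and of $\vec M$, and the task is to verify it does not vanish identically in the entries of $\vec A$.

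The main obstacle is this final polynomial non-vanishing check, the analog of \eqref{lhs-form}, which is messier because the angle-preservation equation at a general $\vec L$ is nonlinear in the entries of $\vec L$ (it involves $|\vec L\cdot e_1|^2$ and $|\vec L\cdot e_2|^2$). However, the check is tractable: the fixed-angle condition is invariant only under rotations and independent positive scalings of the two columns, and a generic $\vec A$ is neither, so transforming by $\vec A$ must generically break the coincidence, exactly as in the area-preservation case.
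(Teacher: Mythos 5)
Your overall strategy is the paper's: the same reduction of the Maxwell direction to \theoref{bigtheorem}, \propref{unit-area-circuit} and \propref{unit-area-maxwell}, and the same ``realize at $\vec L = I$, then apply a linear map'' scheme for the Laman direction. Two things need repair. First, a slip: at $\vec L = I$ the linearized angle constraint is indeed $q+r=0$, but that is \emph{not} symmetry of $\vec M$ (which would be $q=r$); it says $\vec M = \ttmat{\mu}{\lambda}{-\lambda}{\nu}$, i.e.\ diagonal plus skew-symmetric. If you carry ``symmetric'' forward, the second half of your argument analyzes the wrong set of matrices.

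Second, and more substantively, the closing heuristic for the non-vanishing check is not sound, because the lattice velocity transforms contravariantly: $\vec M' = \vec A^{*}\vec M$ with $\vec A^{*}$ the inverse transpose of $\vec A$, not $\vec A\vec M$, so the ``push the constant-angle foliation around by $\vec A$'' picture does not apply to the tangent vector. In fact
$\iprod{\vec M' e_1}{\vec A e_2} + \iprod{\vec A e_1}{\vec M' e_2} = \iprod{\vec M e_1}{e_2} + \iprod{e_1}{\vec M e_2} = q + r = 0$,
so the \emph{unnormalized} inner product of the columns is exactly infinitesimally preserved for \emph{every} $\vec A$; the entire obstruction sits in the norm-normalization terms. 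Carrying out the computation with $\vec A = \ttmat{1}{b}{0}{d}$, as the paper does, gives $-b\left(b^2\mu + d^2\mu + \nu\right)/||(b,d)||^3$, which vanishes identically when $\mu=\nu=0$, i.e.\ when $\vec M$ is an infinitesimal rotation. So genericity of $\vec A$ alone does not ``break the coincidence'': you must first use Propositions \proprefX{unit-area-circuit} and \proprefX{lattice-fix-implies-rk2-block} to conclude that $(\vec v,\vec M)$ does not act trivially on $\vec L$, hence $(\mu,\nu)\neq(0,0)$, and then actually do the computation. Your proposal leaves precisely the step carrying the real content unproved, supported by a justification that would equally ``prove'' the false claim in the infinitesimal-rotation case.
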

\begin{proof}[Proof Sketch.]
The steps are similar to the proof of \theoref{unit-area}.
The new
row in the rigidity matrix corresponds to (in the same notation),
the partial derivatives of the equation:
\begin{equation}
\iprod{\frac{(a,c)}{||(a,c)||}}{\frac{(b,d)}{||(b,d)||}} = \text{const}
\end{equation}
so the new row in the rigidity matrix corresponds to:
\begin{equation}\eqlab{fixed-angle-row}
\iprod{\det(\vec L)\left(
c||(b,d)||^2, -d||(a,c)||^2,a||(b,d)||^2, -b||(a,c)||^2
\right)}{(p,q,r,s)} = 0
\end{equation}
The Maxwell direction is has a proof that is exactly the same as for \theoref{unit-area}.
For the Laman direction, we again start with a generic framework where $\vec L$ is the identity.  If
the non-trivial infinitesimal motion $(\vec v,\vec M)$ does not preserve \eqref{fixed-angle-row},
then we are done.  Otherwise, $\vec M$ has the form
\begin{equation}
\vec M = \ttmat{\mu}{\lambda}{-\lambda}{\nu}
\end{equation}
with $\mu$ and $\nu$ not both zero, because $\vec M$ does not act trivially on $\vec L$.
We then construct a new generic framework by applying a linear map
\begin{equation}
\vec A = \ttmat{1}{b}{0}{d}
\end{equation}
A computation, similar to that for \eqref{lhs-form} yields
\begin{equation}
-\frac{b \left(b^2 \mu +d^2 \mu +\nu \right)}{||(b,d)||^3}
\end{equation}
which is, generically, not zero.
\end{proof}

\section{Cone and reflection frameworks}\seclab{cone}
The next cases of \theoref{bigtheorem} are those of $\Z/2\Z$ acting on the plane by a single reflection
and $\Z/k\Z$ acting on the plane by rotation through angle $2\pi/k$.  The
sparsity invariants are particularly easy to characterize in these two cases:
\begin{itemize}
\item The Teichmüller space is empty, since any rotation center or reflection axis can be
moved on to another by an isometry.
\item The centralizer is all of $\Euc(2)$ for the trivial subgroup and otherwise consists of
rotation around a fixed center or translation parallel to the reflection axis.
\end{itemize}
Since the rank $k$ of the $\rho$-image of a $\Z/k\Z$-colored graph is always zero or one, we specialize \eqref{sparsity} to
obtain the sparsity condition for subgraphs on $n'$ vertices, $m'$ edges, $c'_0$ connected components with
trivial $\rho$-image and $c'_1$ connected components with non-trivial $\rho$-image.
\begin{equation}\eqlab{z-sparsity}
m' \le 2n' - 3c'_{0} - c'_{1}
\end{equation}
We define the family of $\Z$-colored graphs $(G,\bgamma)$ corresponding to minimally rigid
frameworks to be \emph{cone-Laman graphs}.  The name \emph{cone-Laman} comes from considering the
quotient of the plane by a rotation through angle $2\pi/k$, which is a flat cone, as shown in
\figref{cone}. Cone-Laman graphs are closely related to $(2,1)$-sparse graphs \cite{LS08}, and in
this section we use some sparse graph machinery to obtain combinatorial results on them.
\begin{figure}[htbp]
\centering
\subfigure[{A cone-Laman graph.}]{\includegraphics[width=.3\textwidth]{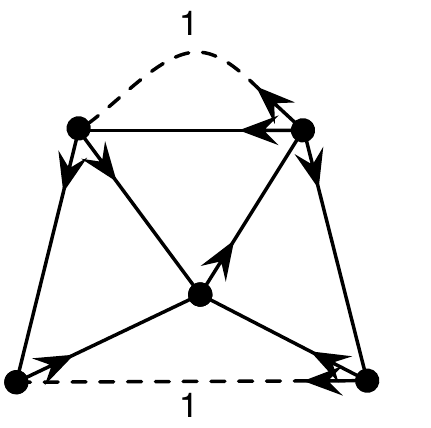}}
\subfigure[{A realization of (a) as a framework in a cone with opening angle $2\pi/3$.}]{\includegraphics[width=.4\textwidth]{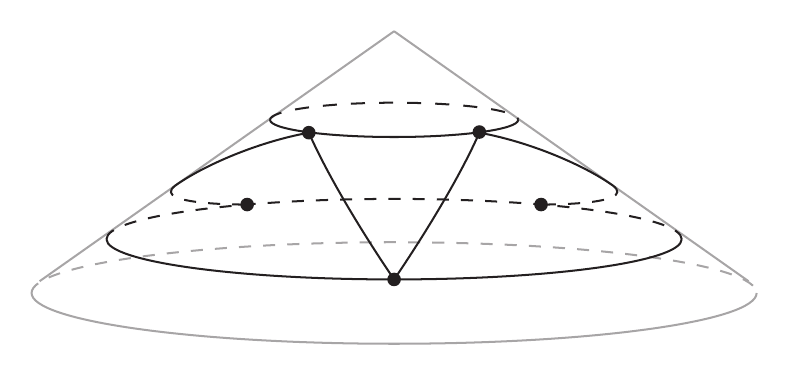}}
\label{fig:cone}
\caption{Figures from \cite{BHMT11}.}
\end{figure}
\subsection{Some background in $(k,\ell)$-sparse graphs}
In this section, we relate cone-Laman graphs to Laman graphs, and we will
repeatedly appeal to some standard results about $(k,\ell)$-sparse
graphs from \cite{LS08}.  In addition, we will require:
\begin{prop}\proplab{circuit-structure2}
Let $G$ be a $(2,1)$-graph.  If there is exactly one $(2,2)$-circuit in $G$, then $G$ is $(2,2)$-spanning.
Otherwise, $G$ is not $(2,2)$-spanning and the $(2,2)$-circuits in $G$ are vertex-disjoint.
\end{prop}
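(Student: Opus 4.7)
The plan is to derive both parts of the statement from a single elementary union count together with the standard matroid fact that every $(2,2)$-dependent subgraph contains a $(2,2)$-circuit. First I would show that any two distinct $(2,2)$-circuits in a $(2,1)$-graph are vertex-disjoint; from there the dichotomy about $(2,2)$-spanning will follow by deleting a single edge.

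For the disjointness step, I would take two distinct $(2,2)$-circuits $C_1, C_2$ with $|E(C_i)| = 2|V(C_i)| - 1$, sharing $v$ vertices and $e$ edges, and write down the vertex and edge counts of $C_1 \cup C_2$. Since $C_1 \cup C_2$ is a subgraph of $G$ and $G$ is $(2,1)$-sparse, the bound $m' \le 2n' - 1$ applied to the union simplifies to $2v + e \le 1$, which forces $v = 0$.

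For the spanning dichotomy, recall that $G$ has $2n-1$ edges, so $G$ is $(2,2)$-spanning exactly when there is some edge $e$ with $G - e$ still $(2,2)$-sparse. If $C$ is the unique $(2,2)$-circuit, then picking any $e \in C$ works, since any $(2,2)$-dependent subgraph of $G - e$ would contain a $(2,2)$-circuit, necessarily $C$, contradicting $e \notin G - e$. If there are at least two $(2,2)$-circuits, the disjointness step guarantees that no single edge lies in more than one of them, so after any edge deletion at least one full $(2,2)$-circuit survives and $G - e$ is $(2,2)$-dependent for every $e$. The whole argument is essentially the two-line union count plus standard matroid bookkeeping; I do not anticipate a serious obstacle beyond setting up the union inequality cleanly.
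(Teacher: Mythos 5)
Your overall strategy is sound and genuinely different from the paper's: the paper passes to a $(2,2)$-basis and invokes the Lee--Streinu decomposition of a non-spanning sparse graph into vertex-disjoint $(2,2)$-components (\cite[Theorem 5]{LS08}), whereas you argue directly from a union count. Your spanning dichotomy (the last paragraph) is correct as written. However, the central computation in your disjointness step is off. With $|E(C_i)| = 2|V(C_i)|-1$, $v$ shared vertices and $e$ shared edges, the union has $|V(C_1)|+|V(C_2)|-v$ vertices and $2|V(C_1)|+2|V(C_2)|-2-e$ edges, and $(2,1)$-sparsity of the union gives
\[
2|V(C_1)|+2|V(C_2)|-2-e \le 2\bigl(|V(C_1)|+|V(C_2)|-v\bigr)-1,
\]
which simplifies to $2v \le e+1$, not to $2v+e\le 1$. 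The correct inequality does not force $v=0$ by itself; it is consistent with, for example, $v=2$ and $e=3$.

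To close the gap you need one further observation. Since $C_1\neq C_2$ are circuits of the $(2,2)$-sparsity matroid, neither edge set contains the other, so $E(C_1)\cap E(C_2)$ is a \emph{proper} subset of $E(C_1)$ and hence $(2,2)$-independent. If $e\ge 1$, these common edges span at most $v$ vertices, so $e\le 2v-2$; combining with $2v\le e+1$ yields $2v\le 2v-1$, a contradiction. Hence $e=0$, and then $2v\le 1$ forces $v=0$, which is the vertex-disjointness you wanted. With this repair your argument is complete, and it is arguably more elementary and self-contained than the paper's, trading the appeal to the $(2,2)$-component decomposition for a small amount of explicit matroid bookkeeping.
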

\begin{proof}
Let $G$ have $n$ vertices.  First assume that $G$ has exactly one $(2,2)$-circuit.
Then $G$ is a $(2,2)$-sparse graph $G'$ plus one edge; since $G$ has $2n-1$ edges,
$G'$ is a $(2,2)$-graph.  Otherwise there is more than one $(2,2)$-circuit.  Pick a
$(2,2)$-basis $G'$ of $G$.  In this case $G'$ does not have enough edges to be a $(2,2)$-graph,
so it decomposes, by \cite[Theorem 5]{LS08}, into vertex-disjoint $(2,2)$-components that span
all of the edges in $G\setminus G'$.  Because $G$ is $(2,1)$-sparse, it follows that each
$(2,2)$-component spans at most one edge of $G\setminus G'$, and thus at most one $(2,2)$-circuit.
We have now shown that the vertex sets of $(2,2)$-circuits in $G$ are each contained in a $(2,2)$-component of $G'$, which is
vertex-disjoint from the others.
\end{proof}

\subsection{Cone-Laman vs. cylinder-Laman}
By comparing the cylinder-Laman counts \eqref{cylinder-sparsity} with the cone-Laman counts \eqref{z-sparsity},
we can see that every cylinder-Laman graph, interpreted as having $\Z/k\Z$ colors, is also cone-Laman for $k$ large enough.
However, the two classes are not equivalent.  One can see this geometrically by considering a colored graph with
two self-loops: these are evidently dependent in the cylinder, and independent in the cone.  The conclusion
is that the interplay between $\teichgamma(\cdot)$ and $\cent(\cdot)$, can yield two different, geometrically
interesting sparse colored families on $(2,1)$-graphs.  The combinatorial relation is:
\begin{theorem}\theolab{cone-v-cylinder}
A $\Z$-colored graph $(G,\bgamma)$ is cylinder-Laman if and only if it is cone-Laman when interpreted
as having $\Z/k\Z$-colors for a sufficiently large $k$ and $G$ is $(2,2)$-spanning.
\end{theorem}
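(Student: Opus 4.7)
The plan is to compare the cylinder-Laman count \eqref{cylinder-sparsity} with the cone-Laman count \eqref{z-sparsity} subgraph-by-subgraph, and then use \propref{circuit-structure2} to convert the sparsity gap into a statement about $(2,2)$-circuits. First, for $k$ larger than every generator of a rank-$1$ $\Z$-image of a subgraph of $G$ (of which there are only finitely many), the $\rho$-image rank of every subgraph of $G$ is unchanged when we reduce $\Z$-colors modulo $k$; this lets us identify $c'_0$ and $c'_1$ in the two colorings. On rank-$0$ subgraphs both bounds read $m'\le 2n'-3c'_0$, while on rank-$1$ subgraphs the cone bound exceeds the cylinder bound by exactly $c'_1-1\ge 0$. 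In particular, cylinder-Laman trivially implies cone-Laman.

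For the direction ``cylinder-Laman $\Rightarrow$ cone-Laman $+$ $(2,2)$-spanning'', observe that a cylinder-Laman $G$ is a $(2,1)$-graph, since the maximum over all cylinder bounds is $2n'-1$. If $G$ contained two vertex-disjoint $(2,2)$-circuits $H_1, H_2$, each $H_i$ would need rank-$1$ $\rho$-image (otherwise cylinder-Laman would force $m_{H_i}\le 2n_{H_i}-3$), so $m_{H_i}=2n_{H_i}-1$. Their disjoint union would then have $m=2n-2$ edges with $c'_1=2$ rank-$1$ components and rank-$1$ $\rho$-image, violating the cylinder bound $2n-3$. Hence $G$ admits at most one $(2,2)$-circuit, so by \propref{circuit-structure2} it is $(2,2)$-spanning.

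For the converse, suppose $(G,\bgamma)$ is cone-Laman and $(2,2)$-spanning while some subgraph $G'$ violates \eqref{cylinder-sparsity}. Since the two bounds coincide when $c'_1\le 1$, we must have $c'_1\ge 2$. Split $G'$ into components $A_1,\dots,A_{c'_0}$ (trivial rank, each cone-bounded by $2n_i-3$) and $B_1,\dots,B_{c'_1}$ (rank one, each cone-bounded by $2n_j-1$). Summing these per-component bounds gives $m'\le 2n'-3c'_0-c'_1$, while the cylinder violation $m'\ge 2n'+2-3c'_0-2c'_1$ forces the total shortfall $\sum_j (2n_j-1-m_j)$ among the $B_j$ to be at most $c'_1-2$. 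Since each of the $c'_1$ summands is non-negative, a short pigeonhole argument shows that at least two of the $B_j$ achieve $m_j=2n_j-1$ exactly. Each such $B_j$ has more than $2n_j-2$ edges and so contains a $(2,2)$-circuit; these two circuits are vertex-disjoint in $G$ because $B_1$ and $B_2$ are, contradicting $(2,2)$-spanning via \propref{circuit-structure2}.

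The main obstacle is the pigeonhole step in the converse: one must extract from a cylinder-violation of size $\ge 1$ not just one but \emph{two} rank-$1$ components saturating the cone bound, because a single saturated rank-$1$ component is compatible with the unique $(2,2)$-circuit that a $(2,2)$-spanning graph is allowed to carry. Once the two disjoint saturated components are in hand, the rest of the argument is a direct application of \propref{circuit-structure2}, and the easy direction reduces to the term-by-term comparison of the two sparsity counts.
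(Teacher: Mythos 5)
Your proof is correct and follows essentially the same route as the paper: compare the counts \eqref{cylinder-sparsity} and \eqref{z-sparsity} componentwise, and use \propref{circuit-structure2} together with the observation that two vertex-disjoint $(2,1)$-tight pieces carry $2n'-2$ edges, which a cylinder-Laman graph cannot support. The only difference is that you spell out the converse direction (cone-Laman plus $(2,2)$-spanning implies cylinder-Laman) with an explicit pigeonhole argument, whereas the paper treats that direction as routine and only records the forward implication; your added detail is sound.
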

\begin{proof}
The only difficult thing to check is that a cylinder-Laman graph $(G,\bgamma)$ is $(2,2)$-spanning.
Assuming that $G$ is not $(2,2)$-spanning, \propref{circuit-structure2} supplies two vertex-disjoint
$(2,1)$-blocks.  If the union spans $n'$ vertices, there are $2n' - 2$ edges,
which violates \eqref{cylinder-sparsity}.
\end{proof}

\subsection{Connections to symmetric finite frameworks}
The following theorem of Schulze is superficially similar to
\theoref{bigtheorem} for $k=3$:
\begin{theorem}[\schulzecthree][{\cite[Theorem 5.1]{S10a}}]\theolab{schulze-c3}
Let $G$ be a Laman-graph with a free $\Z/3\Z$ action $\varphi$.  Then a generic
framework embedded such that $\varphi$ is realized by a rotation through angle
$2\pi/3$ is minimally rigid.
\end{theorem}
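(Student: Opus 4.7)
The plan is to deduce \theoref{schulze-c3} from \theoref{bigtheorem} by passing to the colored quotient and then translating forced-symmetric rigidity into standard rigidity via the representation-theoretic decomposition of the rigidity matrix at a symmetric point.

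First, form the quotient colored graph $(G_0,\bgamma) := G/(\Z/3\Z)$, which has $n' = n/3$ vertices and $m' = (2n-3)/3 = 2n' - 1$ edges, and verify that $(G_0,\bgamma)$ is cone-Laman. Since $G$ is connected (as every Laman graph is) and the action is free, the $\rho$-image of $(G_0,\bgamma)$ must be all of $\Z/3\Z$---otherwise the lift of $G_0$ would be disconnected---so the top-level equality $m' = 2n' - 1 = 2n' - 3c'_0 - c'_1$ holds with $c'_0 = 0$, $c'_1 = 1$. For any subgraph $H' \subseteq G_0$ with $n_H$ vertices, $m_H$ edges, $c_0$ connected components of trivial $\rho$-image, and $c_1$ connected components of non-trivial $\rho$-image, its preimage $\widetilde{H} \subseteq G$ has $3n_H$ vertices, $3m_H$ edges, and exactly $3c_0 + c_1$ connected components, since each trivial-$\rho$ component lifts to three disjoint copies while each non-trivial-$\rho$ component remains connected. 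Per-component Laman sparsity of $G$ then yields $3m_H \le 6n_H - 3(3c_0 + c_1)$, which rearranges exactly to the cone-Laman bound \eqref{z-sparsity}.

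Applying \theoref{bigtheorem} in the $\Z/k\Z$ case for $k = 3$, we conclude that a generic $\Z/3\Z$-framework with colored quotient $(G_0,\bgamma)$ is minimally rigid under forced symmetry: the only forced-symmetric infinitesimal motions are the $1$-parameter family of rotations about the origin.

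The main obstacle is bridging forced-symmetric rigidity to the ordinary minimal rigidity required by \theoref{schulze-c3}, since the latter allows arbitrary, not necessarily symmetric, infinitesimal motions. At a symmetric realization the rigidity matrix $R$ of $G$ is $\Z/3\Z$-equivariant, so by Schur's lemma it decomposes over $\C$ as a direct sum of blocks indexed by the characters $1,\omega,\omega^2$ of $\Z/3\Z$. The trivial block coincides with the forced-symmetric rigidity matrix and therefore has maximal rank by the preceding step, accounting for the $1$-dimensional kernel of rotations. The conjugate blocks for $\omega$ and $\omega^2$ each have size $(2n'-1) \times 2n'$, and one must show that together they contribute precisely the $2$-dimensional kernel of translations, so that the full kernel of $R$ is the expected $3$-dimensional space of Euclidean motions. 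The cleanest route is to exhibit a single symmetric realization at which all three isotypic blocks achieve maximal rank---for instance by inductively building $G$ via $\Z/3\Z$-equivariant Henneberg moves and maintaining maximal rank in each block at every step---and then invoke lower semi-continuity of matrix rank to conclude that the same holds at all generic symmetric realizations.
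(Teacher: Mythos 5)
There is a genuine gap, and it sits exactly where the paper says it must: \theoref{schulze-c3} is quoted from Schulze and is not proved in this paper at all, and the surrounding discussion explicitly warns that it is \emph{not} implied by \theoref{bigtheorem}, because the forced-symmetry theory only controls \emph{symmetric} infinitesimal motions of the lift. Your combinatorial step is fine: the quotient of a Laman graph by a free $\Z/3\Z$-action is indeed cone-Laman (this is \propref{cone-lift} for $k=3$, and your per-component count matches \eqref{z-sparsity}), and \theoref{bigtheorem} then gives maximal rank of the trivial isotypic block of the equivariant rigidity matrix. But that is \emph{all} it gives. The entire content of Schulze's theorem beyond \theoref{bigtheorem} is the assertion you defer with ``one must show'': that the two non-trivial isotypic blocks, each of size $(2n'-1)\times 2n'$, achieve full rank $2n'-1$, so that their combined kernel is exactly the $2$-dimensional space of translations. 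A non-symmetric infinitesimal flex of the lift lives precisely in those blocks, and nothing in \theoref{bigtheorem} or in the cone-Laman combinatorics touches them.

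Your proposed route for closing this --- $\Z/3\Z$-equivariant Henneberg moves preserving maximal rank in every isotypic block --- is a plan, not a proof. It requires (i) an equivariant inductive construction reaching all Laman graphs with free $\Z/3\Z$-actions, which is itself a non-trivial combinatorial theorem and is essentially where the work in Schulze's argument lies, and (ii) a separate geometric rank argument for each of the non-trivial blocks at every Henneberg step, which is not the standard Laman computation because those blocks are not rigidity matrices of smaller frameworks. Until both are carried out, the deduction does not go through. The honest statement in this paper's framework is the one the paper itself makes: \theoref{bigtheorem} rules out symmetric non-trivial flexes when the quotient is cone-Laman-spanning, and the remainder is Schulze's theorem, cited, not derived.
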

We highlight this result to draw a distinction between forced and incidental symmetry:
while \theoref{schulze-c3} is related to \theoref{bigtheorem}, it is not implied
by it.  The issue is that while infinitesimal motions of the cone-framework lift
to infinitesimal motions, only \emph{symmetric} infinitesimal motions of the lift
project to infinitesimal motions of the associated cone-framework.  Thus, from
\theoref{bigtheorem}, we learn that the lift of a generic minimally rigid cone
framework for $k=3$ has no \emph{symmetric} infinitesimal motion as a finite framework,
but there may be a \emph{non-symmetric} motion induced by the added symmetry.
An interesting question is whether the natural generalization of Schulze's Theorem
holds:
\begin{question}\conjlab{symmetric-finite}
Let $k>3$, and let $(G,\varphi)$ be a graph with a free $\Z/k\Z$-action.  Are
generic frameworks with $Z/k\Z$-symmetry rigid if and only if
$G$ is Laman-spanning and its colored quotient is cone-Laman-spanning?
\end{question}
That $G$ must be Laman-spanning is clear.  On the other hand, the discussion above and \theoref{bigtheorem}
imply that to avoid a symmetric non-trivial infinitesimal motion, a generic  $\Z/k\Z$-symmetric finite
framework must have cone-Laman-spanning quotient. Ross, Schulze and Whiteley \cite{RSW11} and
Schulze and Whiteley \cite{SW11} use this same idea in a number of interesting $3$-dimensional
applications.  The graphs described in the question are a family of simple $(2,0)$-graphs;
simple $(2,1)$-graphs have recently played a role in the theory of frameworks restricted to lie in
surfaces embedded in $\R^3$ \cite{NO11,NOP10}.

\subsection{The lift of a cone-Laman graph}
The lift $(\tilde{G},\varphi)$, defined in \secref{lift}, of a $\Z/k\Z$-colored graph is itself
a finite graph $(G,\varphi)$ with a free action by $\Z/k\Z$.  For $k\ge3$ prime, cone-Laman
graphs have a close connection to Laman graphs.
\begin{prop}[\conelift][{\cite[Lemma 6]{BHMT11}}]\proplab{cone-lift}
Let $k\ge 3$ be prime.  A $\Z/k\Z$-colored graph is cone-Laman if and only if its lift $(G,\varphi)$ has
as its underlying graph a Laman-sparse graph $G$ with $\tilde{n}$ vertices and
$2\tilde{n} - k$ edges.
\end{prop}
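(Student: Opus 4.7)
The plan is to prove the equivalence by relating cone-Laman sparsity of $(G,\bgamma)$ to Laman sparsity of the lift $\tilde{G}$, crucially using that $k$ prime makes every subgroup of $\Z/k\Z$ either trivial or all of $\Z/k\Z$. First observe that $\tilde{m} = 2\tilde{n} - k$ is equivalent to $m = 2n - 1$, and the cone-Laman identity $m = 2n - 3c_0 - c_1$ then forces $c_0 = 0,\ c_1 = 1$: the quotient is connected with full $\rho$-image.

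For the \emph{backward direction}, assume $\tilde{G}$ is Laman-sparse. For a colored subgraph $(H,\bgamma|_H)$ of $G$ with components $H_j$ (with $n_j$ vertices and $m_j$ edges), primality of $k$ implies that each trivial-$\rho$ component lifts to $k$ disjoint copies of itself, while each nontrivial-$\rho$ component lifts to a single connected subgraph on $k n_j$ vertices and $k m_j$ edges. Applying Laman sparsity separately to each connected component of the lift of $H$ yields $m_j \le 2 n_j - 3$ for trivial-$\rho$ components and $k m_j \le 2 k n_j - 3$, hence $m_j \le 2 n_j - 1$ by integrality and $k \ge 3$, for nontrivial-$\rho$ components. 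Summing reproduces the cone-Laman bound $m' \le 2n' - 3c_0' - c_1'$ for $H$.

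For the \emph{forward direction}, the invariant-subgraph case is straightforward: if $\tilde{H} \subseteq \tilde{G}$ is $\Z/k\Z$-invariant, it equals the lift of $\pi(\tilde{H})$, so multiplying the cone-Laman bound by $k$ gives $|E(\tilde{H})| \le 2|V(\tilde{H})| - k(3c_0 + c_1) \le 2|V(\tilde{H})| - 3$, using $k \ge 3$ and at least one component. The main obstacle is the non-invariant case. My approach would be a profile analysis: for each $i \in V(G)$ record $A_i = \{g \in \Z/k\Z : (i,g) \in V(\tilde{H})\}$, and for each quotient edge $e = ij$ with color $\gamma_e$ record $B_e = A_i \cap (A_j - \gamma_e)$, so that $|V(\tilde{H})| = \sum_i |A_i|$ and $|E(\tilde{H})| = \sum_e |B_e|$. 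Separating the quotient vertices of $\pi(\tilde{H})$ into ``full'' ($|A_i| = k$) and ``partial'' ($0 < |A_i| < k$) classes, the span of the full vertices is $\Z/k\Z$-invariant and inherits the Laman bound from the invariant case, while each partial vertex forces a proportional loss of edge lifts at every incident edge orbit. Combining the resulting vertex- and edge-deficit estimates (measured against the orbit closure $\tilde{H}^* \supseteq \tilde{H}$) with cone-Laman sparsity of the support $\pi(\tilde{H}) \subseteq G$ should force $|E(\tilde{H})| \le 2|V(\tilde{H})| - 3$. Getting the deficit comparison right, especially when partial vertices have low degree in $\pi(\tilde{H})$, is the technical heart of the argument and would likely proceed by induction on $|V(\tilde{H})|$ to exploit the minimal-violator structure.
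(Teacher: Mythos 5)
The paper does not actually prove this proposition; it imports it as \cite[Lemma 6]{BHMT11}, so there is no in-text proof to compare against. Judged on its own terms, your backward direction is complete and correct: primality controls the number of components of the lift of each $H_j$, and the integrality step $km_j\le 2kn_j-3\Rightarrow m_j\le 2n_j-1$ is exactly right. The invariant case of the forward direction is also fine. The problem is the non-invariant case, which is the entire content of the forward direction, and your sketch of it ("partial vertices force a proportional loss of edge lifts, compare deficits against the orbit closure") does not close. To see why, suppose $H=\pi(\tilde{H})$ is connected with full $\rho$-image and meets \eqref{z-sparsity} with equality, $m'=2n'-1$. Writing $\delta=kn'-|V(\tilde{H})|$ and $\epsilon=km'-|E(\tilde{H})|$ for the deficits against the orbit closure, the target $|E(\tilde{H})|\le 2|V(\tilde{H})|-3$ becomes $\epsilon\ge 2\delta+3-k$, and a purely local count of edges lost at partial vertices cannot deliver this when the missing vertices have small degree in $H$. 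Concretely, in the extremal regime where every $|A_i|\le 2$, the best any deficit count can give without further input is $|E(\tilde{H})|\le 2|V(\tilde{H})|-2$.

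Two ingredients are missing. First, you must apply cone-Laman sparsity not just to $H$ but to the \emph{level subgraphs} $H_t=(\{i:|A_i|\ge t\},\{e:|B_e|\ge t\})$, which are genuine subgraphs of $G$ because $|B_e|\le\min(|A_i|,|A_j|)$; summing $|E_t|\le 2|V_t|-3c_0(H_t)-c_1(H_t)$ over $t$ gives $|E(\tilde{H})|\le 2|V(\tilde{H})|-\sum_t(3c_0(H_t)+c_1(H_t))$, which already finishes unless $\max_i|A_i|=2$ and $H_2$ has a connected, full-$\rho$-image piece. Second, that residual case is killed by a \emph{second} use of primality that your sketch never invokes: if $e=ij$ has $|B_e|=|A_i|=|A_j|$ then $A_i=A_j-\gamma_e$ exactly, so traversing a cycle of $H_2$ with $\rho$-value $\gamma\ne 0$ forces $A_i=A_i-\gamma$; since $\gamma$ generates $\Z/k\Z$ for $k$ prime, $A_i=\Z/k\Z$, contradicting $|A_i|=2<k$. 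This is precisely where the statement fails for composite $k$ (a single loop colored $2$ in $\Z/4\Z$ is cone-Laman but lifts to a doubled edge), so any correct proof of the forward direction must contain an argument of this kind. Your profile sets $A_i$, $B_e$ are the right objects, but the plan as written would stall exactly at this boundary case.
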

As noted in \cite{BHMT11}, this statement is false for $k=2$, so while we can relax the
hypothesis somewhat at the expense of a more complicated statement, they cannot all
be removed.

Although it is simple, \propref{cone-lift} is surprisingly powerful, since it shows that
one can study cone-Laman graphs using all the combinatorial tools related to Laman
graphs. \propref{cone-lift} depends in a fundamental way on the fact that cone-Laman graphs
have $2n-1$ edges, and it doesn't have a naive generalization to colored-Laman or
unit-area Laman graphs.
\begin{question}
What are the $\Z^2$-colored graphs $(G,\bgamma)$ with the property that every \emph{finite}
subgraph of the periodic lift $(\tilde{G},\varphi)$ is Laman-sparse?
\end{question}
We expect that this should be a more general family than unit-area-Laman graphs.  On the other
hand, it has been observed by Guest and Hutchinson \cite{GH03} that the lift of a colored-Laman graph
is not Laman-sparse.

\section{Groups with rotations and translations} \seclab{crystal}

The final case of \theoref{bigtheorem} is that of crystallographic groups acting discretely and
cocompactly by translations and rotations.  It is a classical fact \cite{B11,B12} that
all such groups other than $\Z^2$ are semi-direct products of the form
$$\Gamma_k :=\Z^2 \rtimes \Z/k\Z $$
where $k = 2, 3, 4, 6$. The action on $\Z^2$ by the generator of $\Z/k\Z$ is given
by the following table.
\begin{center}
\begin{tabular}{|c|c|c|c|c|}
\hline  $k$ & $2$ & $3$ & $4$ & $6$ \\
\hline matrix & $\JMat{-1 & 0 \\ 0 & -1}$  & $\JMat{ 0 & -1 \\ 1 & -1} $ & $\JMat{0 & -1 \\ 1 & 0} $
&  $\JMat{0 & -1 \\ 1 & 1} $ \\
\hline
\end{tabular}
\end{center}

\subsection{The quantities $\teich(\cdot)$ and $\cent(\cdot)$ for subgroups}
For any discrete faithful representation $\Phi: \Gamma_k \to \Euc(2)$, it is a (non-obvious) fact that for any element $t \in \Z^2$ in $\Gamma_k$, the image $\Phi(t)$
is necessarily a translation, and for any $r \in \Gamma_k \setminus \Z^2$, the image $\Phi(r)$ is necessarily a rotatation.  Consequently, we respectively call such
elements of $\Gamma_k$ translations and rotations, and we call $\Trans(\Gamma_k) = \Z^2$ the translation subgroup of $\Gamma_k$.  For any subgroup
$\Gamma' < \Gamma_k$, its translation subgroup is $\Trans(\Gamma') = \Gamma' \cap \Trans(\Gamma_k)$.

Let $\Phi$ be a representation of $\Gamma_k$.
In the cases $k \neq 2$, we must have $\Phi(\Trans(\Gamma_k))$ preserved by an order $k$ rotation, and so the image of $\Trans(\Gamma_k)$ is
determined by the image of a single nontrivial $t \in \Trans(\Gamma_k)$.  Furthermore, by acting on $\Phi$ by a rotation in $\Euc(2)$, we can always obtain a new
representation $\Phi'$ such that $\Phi(t)$ has translation vector $(\lambda, 0)$ for some $\lambda \in \R$.  Consequently, we have shown the following.

\begin{prop} \proplab{teich346}
Let $\Gamma'$ be a subgroup of $\Gamma_k$ for $k = 3, 4, 6$.  Then, $\teich_{\Gamma_k}(\Trans(\Gamma')) = 1$ if $\Trans(\Gamma')$ is nontrivial and is $0$ otherwise.
\end{prop}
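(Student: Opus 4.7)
My plan is to leverage the normalization already carried out in the paragraph immediately preceding the proposition, which contains the bulk of the geometric argument.

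First I would dispose of the trivial case. If $\Trans(\Gamma')$ is the trivial subgroup, then $\Rep(\Trans(\Gamma'))$ consists of a single point (the trivial representation), so its image under restriction from $\Rep(\Gamma_k)$, modulo $\Euc(2)$, has dimension $0$.

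For the nontrivial case, I would fix any nontrivial $t \in \Trans(\Gamma')$ and study the composition of the restriction map $\Rep(\Gamma_k) \to \Rep(\Trans(\Gamma'))$ with the quotient by $\Euc(2)$. The key input, already supplied above, is that for $k \in \{3,4,6\}$ any representation $\Phi$ of $\Gamma_k$ makes $\Phi(\Trans(\Gamma_k))$ a rank-$2$ lattice preserved by an order-$k$ rotation, so $\Phi$ restricted to $\Trans(\Gamma_k)$ (and hence to the subgroup $\Trans(\Gamma') \subseteq \Trans(\Gamma_k)$) is determined by $\Phi(t)$ alone. By post-composing with an element of $\Euc(2)$, we may normalize so that $\Phi(t)$ is translation by $(\lambda,0)$; faithfulness forces $\lambda \neq 0$, and the residual $\Euc(2)$-ambiguity (reflection across the $x$-axis) at worst identifies $\lambda$ with $-\lambda$. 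This exhibits the restricted Teichmüller space as a quotient of a $1$-parameter family, so its dimension is at most $1$.

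To show the dimension is exactly $1$, I would exhibit an explicit family: for each $\lambda > 0$, place the rotation center at the origin, set $\Phi(t) = (\lambda,0)$, and extend using the matrix from the table preceding the proposition to generate $\Phi$ on all of $\Trans(\Gamma_k)$ and on the rotation generator. This gives a faithful discrete representation of $\Gamma_k$ whose restriction to $\Trans(\Gamma')$ varies nontrivially with $\lambda$, completing the dimension count. The one subtlety worth checking — really a bookkeeping point rather than a real obstacle — is that the "determined by $\Phi(t)$" step applies uniformly whether $\Trans(\Gamma')$ has rank $1$ or rank $2$; this is automatic because $\Phi$ is then determined on the larger lattice $\Trans(\Gamma_k)$, hence on every subgroup of it.
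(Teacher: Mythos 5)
Your argument is essentially the same as the paper's, which proves this proposition in the paragraph immediately preceding it via the identical normalization: for $k=3,4,6$ the lattice $\Phi(\Trans(\Gamma_k))$ must be preserved by an order-$k$ rotation, so it is determined by the image of a single nontrivial translation $t$, and acting by a rotation in $\Euc(2)$ puts $\Phi(t)$ into the form $(\lambda,0)$. Your explicit one-parameter family for the lower bound makes precise a step the paper leaves implicit; the only nit is that the residual identification $\lambda\sim-\lambda$ comes from rotation by $\pi$ (or reflection in the $y$-axis) rather than reflection across the $x$-axis, which fixes $(\lambda,0)$ --- immaterial to the dimension count.
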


In the case of $k = 2$, it turns out that since order $2$ rotations preserve all lattices, this puts no constraint on how $\Phi$ embeds $\Trans(\Gamma_2)$.  Consequently, we have
$\teich$ values similar to the periodic case.

\begin{prop} \proplab{teich2}
Let $\Gamma'$ be a subgroup of $\Gamma_2$.  Then, $\teich_{\Gamma_k}(\Trans(\Gamma')) = \text{max} (2\ell-1, 0)$ where $\ell = \rk(\Trans(\Gamma'))$.
\end{prop}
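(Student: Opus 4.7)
The plan is to reduce this proposition directly to the periodic case \propref{z2-teich} by using the observation, highlighted in the paragraph just before the statement, that the linear map $-I$ preserves every lattice in $\R^2$. The upshot is that for $k = 2$ no lattice-shape constraint is imposed by extending a translation representation across the semi-direct product, so the restricted Teichm\"uller space of a translation subgroup of $\Gamma_2$ coincides with the ``unrestricted'' one as a subgroup of $\Z^2$ computed in \propref{z2-teich}.

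Concretely, I would first exhibit an extension map. Given any faithful representation $\Phi_0 : \Z^2 \to \Euc(2)$ by translations, define $\Phi : \Gamma_2 \to \Euc(2)$ by sending the order-$2$ generator $r \in \Gamma_2 \setminus \Z^2$ to rotation by $\pi$ about an arbitrarily chosen center in $\R^2$. The semi-direct product relation $\Phi(r)\Phi(t)\Phi(r)^{-1} = \Phi(-t)$ for every $t \in \Z^2$ then holds because rotation by $\pi$ about \emph{any} point of $\R^2$ conjugates a translation by $\vec v$ to a translation by $-\vec v$. Faithfulness is immediate from that of $\Phi_0$ together with the observation that $\Phi(r)$ is orientation-preserving but not a translation.

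Next I would identify the image of the restriction map. Since every faithful translation representation of $\Z^2$ extends to a faithful representation of $\Gamma_2$, the restriction $\Rep(\Gamma_2) \to \Rep(\Trans(\Gamma'))$ factors through restriction from $\Z^2$ to $\Trans(\Gamma')$ and has the same image as that considered in \propref{z2-teich}. Quotienting by $\Euc(2)$ yields
\[
\teich_{\Gamma_2}(\Trans(\Gamma')) \;=\; \teich_{\Z^2}(\Trans(\Gamma')) \;=\; \max(2\ell - 1,\; 0),
\]
where $\ell = \rk(\Trans(\Gamma'))$, which is the claim.

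The only step requiring any care is the simultaneous verification that the extension $\Phi$ is faithful and compatible with the $-I$-action on $\Z^2$, but both reduce to the elementary fact that $\pi$-rotations invert translations; no geometric input beyond that of \propref{z2-teich} is required. This is in contrast to the $k = 3, 4, 6$ cases handled by \propref{teich346}, where an order-$k$ rotation preserves lattices only of a restricted shape, and the extra constraints collapse the translation-orbit dimension from $2\ell - 1$ down to $1$.
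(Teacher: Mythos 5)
Your proof is correct and follows essentially the same route as the paper, which argues in one line that order-$2$ rotations preserve every lattice and therefore impose no constraint on how $\Phi$ embeds $\Trans(\Gamma_2)$, so the count reduces to the periodic case of \propref{z2-teich}. Your explicit construction of the extension (sending the order-$2$ generator to a $\pi$-rotation about an arbitrary center and checking the semi-direct product relation) simply fills in the detail the paper leaves implicit.
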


The dimension of the centralizer, similarly, is concrete and computable.  If a subgroup contains a translation $t$, then $\Phi(t)$
commutes precisely with translations of $\Euc(2)$.  If a subgroup $\Gamma'$ of $\Gamma_k$ is a cyclic subgroup of rotations, then $\Phi(\Gamma')$ is a group of rotations
with the same rotation center, and it is easy to see that such a group commutes precisely with the ($1$-dimensional) subgroup in $\Euc(2)$ of rotations with that center.
Consequently, we obtain the following characterization of $\cent$.

\begin{prop} \proplab{cent2346}
Suppose that $\Gamma'$ is a subgroup of $\Gamma_k$.  Then,

$$\cent(\Gamma') = \left\{ \begin{array}{rcl} 3 & & \text{if $\Gamma'$ is trivial} \\
2 & & \text{if $\Gamma'$ contains only translations} \\
1 & & \text{if $\Gamma'$ contains only rotations} \\
0 & & \text{if $\Gamma'$ contains both rotations and translations}  \end{array} \right. $$
\end{prop}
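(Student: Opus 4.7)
The plan is to argue by case analysis on the structure of $\Gamma'$, leveraging the fact (cited in \secref{maxwell}) that $\cent(\Gamma')$ depends only on the abstract structure of $\Gamma'$ as a subgroup and not on the choice of representation $\Phi$. Since we are only tracking the dimension, it is enough in each case to identify the connected component of the identity of the centralizer inside $\Euc(2)$, viewed as a $3$-dimensional Lie group.

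The first case is trivial: if $\Gamma'$ is the trivial subgroup, every isometry commutes with it, so the centralizer is all of $\Euc(2)$ and the dimension is $3$. For the translation-only case, I would invoke the standard computation that an isometry $g \in \Euc(2)$ commutes with a nontrivial translation by $v$ if and only if the linear part of $g$ fixes $v$. The identity component of the group of such $g$ is the $2$-dimensional group of all translations, independent of whether $\Trans(\Gamma')$ has rank $1$ or $2$, giving dimension $2$.

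For the rotation-only case, the first step is a short lemma: if $\Gamma'$ contains only rotations (and is nontrivial), then all images $\Phi(\gamma)$ for $\gamma \in \Gamma' \setminus \{e\}$ share a common rotation center $c$. This follows because the composition of two rotations about distinct centers is either a translation or a rotation about a third center, which would force $\Gamma'$ to contain a translation. Once all rotations share a center $c$, an isometry $g$ commutes with a rotation $R_c^\theta$ (for $\theta$ not a multiple of $\pi$) if and only if $g$ is orientation-preserving and fixes $c$, i.e., is itself a rotation about $c$. The identity component is thus the $1$-dimensional circle of rotations about $c$. The special case $k = 2$ only adds discrete components (reflections through $c$), so the dimension is still $1$. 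Finally, in the mixed case, the centralizer is contained in the intersection of the centralizers of a nontrivial translation and a nontrivial rotation; the former has identity component equal to all translations and the latter has identity component equal to rotations about a fixed point, and these two subgroups intersect only in the identity, yielding dimension $0$.

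The main obstacle I anticipate is not conceptual but bookkeeping: isolating continuous from discrete contributions to the centralizer in the $k = 2$ subcase (where $\pi$-rotations enlarge the \emph{set} of commuting isometries without enlarging its \emph{dimension}), and giving a clean argument that rotations in a translation-free subgroup share a center. Both are standard facts about planar isometries and can be dispatched quickly; the rest of the proof is transparent case checking tied to \propref{teich346} and \propref{teich2}.
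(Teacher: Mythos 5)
Your proposal is correct and follows essentially the same route as the paper, whose (terse) justification is exactly this case analysis: all of $\Euc(2)$ for the trivial subgroup, the $2$-dimensional translation group (plus a lower-dimensional set of reflections/glides parallel to the translation vectors) for translation-only subgroups, the circle of rotations about the common center for rotation-only subgroups, and the trivial intersection of the two identity components in the mixed case. The one step worth tightening is your common-center lemma: two nontrivial rotations about distinct centers need not compose to a translation (their angles may not cancel), so the clean argument is that the commutator of two nontrivial rotations is always a translation, nontrivial exactly when the centers differ --- this is the content behind the fact the paper invokes elsewhere (Lemma 4.2 of the cited crystallographic-symmetry paper) that a rotation-only subgroup is cyclic with a single fixed center.
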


\subsection{The quantities $\teich(\cdot)$ and $\cent(\cdot)$ for colored graphs}

For any $\Gamma_k$-colored graph $(G', \bgamma)$, we associate subgroups of $\Gamma_k$.
Suppose $G$ has components $G'_1, \dots, G'_c$ and choose base vertices $b_1, \dots, b_c$.  We set $\Gamma_i' = \rho(\pi_1(G'_i, b_i))$, and
$$\Trans(G) = \langle \Trans(\Gamma_1)', \Trans(\Gamma_2'), \dots, \Trans(\Gamma_c') \rangle $$

The $\Gamma$-Laman sparsity counts are defined in terms of $\teich(\Trans(G))$ and $\cent(\Gamma_i')$.  Since we chose base vertices $b_i$, one might
worry that these quantities are not well-defined.  However, changing the base vertex in $G_i$ has the effect of conjugating
$\Gamma_i'$.  In $\Gamma_k$, conjugates of translations are translations and conjugates of rotations are rotations, so $\cent(\cdot)$
is then well-defined
by \propref{cent2346}, and $\teich(\Trans(G))$ for $k=3,4,6$ by \propref{teich346}.
Indeed, for $k=3,4,6$, $\teich(\Trans(G)) = 1$ if any $\Trans(\Gamma_i')$ is nontrivial
and is $0$ otherwise.  In $\Gamma_2$, all translation subgroups are normal, so $\Trans(\Gamma_i')$ itself does not
depend on the choice of base vertex.

\subsection{Computing $\teich$ and $\cent$ for $\Gamma_2$-colored graphs}
A quick and simple algorithm exists to compute $\teich(\Trans(G))$ and $\cent(\Gamma_i')$ which relies on finding a suitable generating set for $\Gamma_i'$.
A generating set for $\pi_1(G'_i, b_i)$ can be constructed as follows.  Find a spanning tree $T_i$ of component $G_i$.  Then for each edge $jk \in G_i - T_i$,
let $P_{jk}$ be the path traversing the (unique) path $b_i$ to $j$ in $T_i$, then $jk$, and then the (unique) path $k$ to $b_i$ in $T_i$.  The $P_{jk}$ ranging over $jk \in G_i - T_i$
generate $\pi_1(G'_i, b_i)$, and so $\eta_{jk} := \rho(P_{jk})$ ranging over the same set generates $\Gamma_i'$.

Next, relabel the generators of $\Gamma_i'$ as $r_{j, i}, t_{j, i}$ where the $r_{j, i}$ are rotations and the $t_{j, i}$ are translations.  If there are only translations, no modifications are
required and $\Trans(\Gamma_i') = \Gamma_i'$.  Otherwise, set $t'_{j, i} = r_{1, i} r_{j, i}$ for $j \geq 2$.  Since all rotations are order $2$, the $t'_{j, i}$ are all translations and
$\Gamma_i'$ is generated by $r_{1,i}$, the $t'_{j, i}$ and the $t_{j, i}$.  At this point, checking $\cent{\Gamma_i'}$ is straightforward.
Furthermore, one can show that $\Trans(\Gamma_i')$ is generated by the $t'_{j, i}$
and $t_{j, i}$, and so $\Trans(G)$ is generated by the $t'_{j, i}$ and the $t_{j, i}$ over all $i$ and $j$.  Then,
computing $\rk(\Trans(G))$ is basic linear algebra, and $\teich(\Trans(G))$ is given by \propref{teich2}.

\subsection{Computing $\teich$ and $\cent$ for $\Gamma_k$-colored graphs for $k \neq 2$}
In this case, all that needs to be determined is whether each $\Gamma_i'$ contains rotations, translations, or both.
Compute generators $r_{j, i}, t_{j, i}$ for $\Gamma_i'$ as above.  Then, $\Gamma_i'$ contains a rotation if
and only if there is at least one $r_{j, i}$.  The only real difficulty is determining if $\Gamma_i'$
contains translations when the generators are all rotations.  Any group consisting entirely of
rotations is cyclic (see, e.g., \cite[Lemma 4.2]{MT11}), and so it suffices to compute the commutators
$r_{1, i} r_{j, i} r_{1, i}^{-1} r_{j, i}^{-1}$ for $j \geq 2$.  The group contains no translations
if and only if these commutators are all trivial.

\bibliographystyle{plainnat}

\end{document}